\documentclass[12pt,a4paper]{article}
\usepackage{amsmath}
\usepackage{amstext}
\usepackage{amsfonts}
\usepackage{amssymb}
\usepackage{amsthm}
\usepackage{graphicx}
\usepackage{hyperref,amssymb}
\usepackage{xcolor}
\newtheorem{theorem}{Theorem}[section]

\newtheorem{examples}[theorem]{Examples}
\newtheorem{definition}[theorem]{Definition}
\newtheorem{definitions}[theorem]{Definitions}

\newtheorem{lemma}[theorem]{Lemma}

\newtheorem{proposition}[theorem]{Proposition}
\newtheorem{remark}[theorem]{Remark}

\newcommand{\de}{\delta}
\newcommand{\si}{\sigma}

\newcommand{\ut}{\underline{t}}
\newcommand{\ua}{\underline{a}}
\newcommand{\ub}{\underline{b}}
\newcommand{\ud}{\underline{\delta}}
\begin{document}

	\title{\textbf{Structure and arithmetic of multivariate Ore extensions}}
	
	\author{Andr\'e Leroy$^{a}$ \thanks{andre.leroy@univ-artois.fr} ~~~ Huda Merdach$^{b}$ \thanks{huda\_merdach@du.edu.eg} \\
		$^{a}$Artois Universit\'e, UR 2462 (LML), Facult\'e of Sciences,\\ 62 300, Lens, France. 
		\\ $^{b}$ Department of Mathematics, Faculty of Science,\\
		Damietta University, Damietta 34517, Egypt.}
	\date{}
	\maketitle	

\textbf{Abstract:}  We give the basic structure of the multivariable Ore extensions $S=A[\ut; \sigma, \ud]$ introduced in the work of Mart\'i{n}ez-Pe\~nas and Kschischang. The Pseudo multilinear transformations (PMT's) are introduced and correspond to modules over $S$.  These maps are strongly connected to the evaluation of polynomials in $S$. A general product formula is obtained. PMT's help to put some structure on the set of roots  of a polynomial $f(t) \in S$. 
	
	\textit{\textbf{keywords}:}  Pseudo multilinear transformation, Skew Polynomial ring, Centralizer, Semi-invariant, Roots of polynomials.
	\\  
	\textit{2010 Mathematics Subject Classification}: 15A04, 16S36,  16S20, 16U70, 17B20.
	\section{Introduction}
	 
		~~~ The evaluation of polynomials is at the heart of many areas of mathematics. The Ore extension rings (or skew polynomial rings) are one of the most engaging  notions of polynomials in noncommutative algebra. The first appearance  of Ore extension $K[t; \si, \delta]$ dates back to Ore (cf. \cite{Or}) in 1933. Numerous authors studied skew polynomials and their evaluations in particular when the coefficient ring is a division ring or a prime ring (cf. e.g. \cite{L2012}).  Ore extensions have been used in ring theory as a source of examples (cf. e.g. \cite{CP 2003}, \cite{L2012}) they also give useful tools in quantum groups \cite{BG}. Furthermore, they appeared more recently in coding theory (cf. e.g. \cite{BL2013}, \cite{BU cod 2009}, \cite{BGU}).   
		
	This paper is concerned with a construction of a noncommutative polynomial ring, denoted $S=A[\ut;\sigma,\ud]$, that is essentially due to U. Mart\'i{n}ez-Pe\~nas and F. R.
	Kschischang (cf. \cite{MP2019},\cite{MP2022}).  This construction already appeared in an unnoticed paper by 
	Elena Kreindler \cite{K}.   The $n$ variables $t_1,t_2,\dots,t_n$ are free variables and this extension $S$  has a different behavior than the "usual" iterated Ore extension (cf. \cite{BL2023}).  We slightly extend the context by  considering a general ring $A$ for the coefficients of the polynomials.  

	 In Section 2, some basic properties and examples are given.
		  We introduce the PMT.   These maps are 
		  our main tool.  The 
		  use of PMT allows a study of both the 
		  left $S$-modules and their morphisms.   
		  This generalizes previous works that 
		  appear in case of one variable (cf. \cite{L2012}, \cite{L 1995}).  This is given in Proposition (\ref{Pro for varphi and 1 1 
		  correspondence}).  The PMT's also play a fundamental role in the evaluation of an 
	  element $f(\ut)\in S$. We give several 
	  examples in (\ref{examples CV and others}) and  (\ref{examples of PMT}).  One of the main results in this section is a complete description of the left $S=A[\ut; \sigma, \ud]$-modules and their morphisms (see in particular, Proposition (\ref{Pro ring of endo})). 
	  
	  In Section 3, we  determine the center of $S$ when the base ring is a division ring, in Proposition (\ref{Pro for 
	  	the center of $S$}). Also, we introduce the semi-invariant polynomials and construct several examples in  (\ref{example for semi-invariant polyomial}) and (\ref{examples for Semi invariant and others}). 
	In Theorem (\ref{Th multivariate Ore extension}) we give, under some hypothesis, the structure of semi-invariant polynomials. 
%	 constructintroduce when $S_i = K[t_i, \si_i, \delta_i]$  that contain in $S= K[\ut, \si, \ud]$ is not simple and the case that all monic semi-invariant polynomials contained in $S_i$.
	 
	  In section 4, the evaluation of polynomials is presented. This is completely different from the evaluation in iterated Ore extensions defined in \cite{BL2023}. In addition, we study the relations between evaluation and PMT in Proposition (\ref{Pro for the link between evaluation and PMT}). 
	  In particular, we obtain a general product formula in Proposition (\ref{Pro product formula}) even when the base ring is not a division ring. We define a relation $\sim$ between elements in $A^n$. In Proposition (\ref{Pro Roots and conjugation}) and Proposition (\ref{Pro semi invarent and right multiplication}), PMT's are used to describe the decomposition of the set $V(f)=\{\ua \in A^n \mid f(\ua)=0\}$,$f(\ut)\in S=A[\ut,\si,\ud]$, into its $ \sim $ classes. 
	  %For $f(\ut)\in S=A[\ut,\si,\ud]$, the decomposition of the set of its roots
	 %$V(f)=\{\ua \in A^n \mid f(\ua)=0\}$ in classes define by $\sim$ can described via PMT's in Proposition (\ref{Pro Roots and conjugation}) and Proposition (\ref{Pro semi invarent and right multiplication}).   
	
	   In the last section, we introduce ($\sigma,\ud$)-centralizer.  We give different characterizations of these sets in Proposition (\ref{Pro for centralizer}).  To each element $\ua \in A^n$ we attach, in Proposition (\ref{Pro T_a is right C linear}), a PMT $T_{\ua}$ and show that $T_{\ua} $ is right linear over the ($\sigma,\ud$)-centralizer of $\ua$ . Finally,  for a domain A, and an element  $f \in S = A[\ut; \si, \ud] $, we describe the set of roots of a polynomial $V(f(\ut))$ in terms of the kernel of $f(T_{\ua})$.  The main result for this section is Proposition (\ref{Pro for domain}) that presents some structure on the set of roots of polynomial $V(f(\ut))$.

	All the rings will be associative with identity. 
	
	\section{Structure of multivariate Ore extensions}
	
	~~~In this section, we introduce our main objects and the tools that we will use.  In particular, the Pseudo Multivariate Transformations are defined and applications of these maps are given in (cf. \cite{MP2019}).
	
	\begin{definition}
		Consider a ring $A$, $n$ variables $t_1,\dots,t_n$, $\si: A \rightarrow M_n (A)$ a ring homomorphism, and a sequence of $n$ additive maps $\de_1,\dots,\de_n$.  We denote by 	$M$ the free monoid generated by the variables $\{t_1,\dots,t_n\}$
		and by $S=A[\ut; \si,\ud]$ the set of polynomials of the form $\sum_{m\in M}\alpha_mm$, where $\alpha_m\in A$ and $m\in M$.  On this set, we define the natural addition and we introduce a multiplication based on the concatenation in $M$ and on the following commutation rules:
		\begin{equation}
		\forall \; 1\le i \le n, \;\forall \; a\in A, \quad 
		t_ia= \sum_{j=1}^n \si(a)_{ij}t_j +\de_i(a).  
	\end{equation}
	\end{definition}
	
	For editorial reasons, for $a\in A$, we will write $\si_{ij}(a)$ instead of  $\si(a)_{ij}$, viewing $\si_{ij}$ as a map from $A$ to $A$.  The next proposition gives some key features of this construction.  We leave the proof to the reader.
	
	\begin{proposition} \label{Pro for assoc. and hom}
		(1) The associativity of the ring $S$ leads to the following rule for 
		the maps $\de_1,\dots,\de_n$:
		\begin{equation}
		\forall a,b \in A, \quad \de_i(ab)=
		\sum_{j=1}^n\si_{ij}(a)\de_j(b)+\de_i(a)b.
		\end{equation}
		In a compact form, this can be written as $\underline{\de}(ab)=\si(a)\underline{\de}(b)+\underline{\de}(a)b.$  The sequence of maps $\delta_{\ua}$ will be called a $\sigma$-derivation.
		
		(2) The fact that $\si$ and $\ud$ satisfy  the above properties can also be summarized by asking that the map $\phi$ from $A$ to the  matrix ring $M_{(n+1)\times (n+1)}(A)$ defined by
		$$
		\phi:\; A \rightarrow M_{(n+1)}(A) \; \text{with} \; a \mapsto \begin{pmatrix}
			\si(a) & \underline{\de}(a) \\
			0 & a
		\end{pmatrix} \text{is a ring homomorphism.}$$ 
	\end{proposition}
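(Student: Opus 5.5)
For part (1), I will compute $t_i(ab)$ in two ways, using only the commutation rule of the definition and associativity of $S$. Applying the rule directly to the element $ab\in A$ gives
$$t_i(ab)=\sum_{j=1}^n \si_{ij}(ab)t_j+\de_i(ab).$$
Associativity lets us write $t_i(ab)=(t_ia)b$ instead. Expanding $t_ia$ first gives $\sum_j \si_{ij}(a)t_jb+\de_i(a)b$. Applying the rule once more to each $t_jb$ then gives
$$\sum_{k}\Big(\sum_j \si_{ij}(a)\si_{jk}(b)\Big)t_k+\sum_j\si_{ij}(a)\de_j(b)+\de_i(a)b.$$

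Next I compare the two expressions. $S$ is defined as the set of sums $\sum_m \alpha_m m$ over the free monoid, so the monomials $1,t_1,\dots,t_n$ are left $A$-linearly independent. Comparing the coefficients of $t_k$ gives $\si_{ik}(ab)=(\si(a)\si(b))_{ik}$. This is consistent with $\si$ being a ring homomorphism into $M_n(A)$, and it explains why that hypothesis is needed. Comparing the constant terms gives exactly
$$\de_i(ab)=\sum_j\si_{ij}(a)\de_j(b)+\de_i(a)b.$$
Stacking these identities for $i=1,\dots,n$ as a column vector yields the compact form $\ud(ab)=\si(a)\ud(b)+\ud(a)b$.

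The one point needing care is reading "associativity leads to" correctly. The statement is a necessary condition: if the multiplication defined by concatenation and the commutation rules is associative, then the identities hold. The converse, that these identities make $S$ associative, is the genuinely harder direction. I would treat it only as a remark: one checks associativity on generators $t_i(ab)$ and $(t_it_j)a$ and extends by induction on monomial length.

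For part (2), I compute the product of the two block upper-triangular matrices:
$$\phi(a)\phi(b)=\begin{pmatrix}\si(a)\si(b) & \si(a)\ud(b)+\ud(a)b\\ 0 & ab\end{pmatrix}.$$
Additivity of $\phi$ follows from additivity of $\si$ and of each $\de_i$. Unitality requires $\si(1)=I_n$, which holds since $\si$ is a unital ring homomorphism, and $\ud(1)=0$. The latter follows from part (1) with $a=b=1$: we get $\ud(1)=\ud(1)+\ud(1)$, so $\ud(1)=0$.

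Hence $\phi$ is multiplicative exactly when $\si$ is multiplicative and the $\si$-derivation rule of part (1) holds. This gives the claimed equivalent summary of the two conditions.
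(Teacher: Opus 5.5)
Your proposal is correct. The paper gives no proof here (it is left to the reader), and what you do is exactly the routine verification intended: compare the two expansions of $t_i(ab)$ using left $A$-independence of $1,t_1,\dots,t_n$, then compute $\phi(a)\phi(b)$ blockwise, with $\ud(1)=0$ giving unitality. Your caveat is also accurate: part (1) is only a necessary condition, and the harder converse (associativity of $S$ from these rules) is not needed for the statement.
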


	\begin{examples}
		\label{examples CV and others}
		{\rm
			\begin{enumerate}
			\item Let $\ua=(a_1,\dots,a_n)^t\in A^n$.  We define \\ 
			$\delta_{\ua}(x)= \ua x-\sigma(x)\ua$ in other words, 
			$\delta_{\ua}=(\delta_{a_1},\delta_{a_2},\dots, \delta_{a_n})^t$ where 
			$\delta_{a_i}(x)=a_ix-\sum_{j=1}^n\sigma_{ij}(x)a_j$. 
			One can check that $\delta_{\ua}$ is 
			indeed a $\sigma$-derivation.   When 
			$\delta=\delta_{\ua}$, we can erase the 
			derivation in the sense that, 
			$A[\ut;\sigma,\delta_{\ua}]=A[\ut-\ua;\sigma]$.  Such a derivation will be called it multivariate inner derivations.
%			or $$A[\ut;\sigma,\delta_{\ua}]=A[\delta_{\ua};\ua - \si].$$
%			
			\item Similarly if there exist $U\in Gl_n(A)$ and $\tau_1,\dots,\tau_n$ automorphisms of the ring $A$, such that, for every $x\in A$, we have 
			$$
			\sigma(x)=U(diag(\tau_1(x),\dots \tau_n(x)))U^{-1}
			$$
			then, noting $\tau=diag(\tau_1,\dots,\tau_n$) and
			%$$
			%A[\ut;\sigma,\ud]=A[U\utU^{-1}
			%$$
			$\underline{y}=U^{-1}\ut$, we get, for any $x\in A$, $\underline{y}x= U^{-1}\ut x=U^{-1}(\sigma(x)\ut+\ud(x))=U^{-1}\sigma(x)\ut+U^{-1}\ud(x)=U^{-1}\sigma(x)UU^{-1}
			\ut+U^{-1}\ud(x)=\tau(x)\underline{y}+U^{-1}\ud(x)$. One can check that $U^{-1}\ud(x)$ is a $\tau$-derivation, so that we can write
			$$
			A[\ut;\sigma,\ud]=A[\underline{y};\tau,U^{-1}\ud].
			$$
			
			\item  Assume that $A=K$ is a division ring finite-dimensional over its center $k$ and that $\sigma(\alpha)=diag(\alpha,\dots,\alpha)\in M_n (K)$ for any $\alpha\in k$, then by a direct application of the Skolem Noether theorem (cf. Cohn, P. M. Book \cite{C}, p. 262) we obtain that there exists an invertible matrix $U\in Gl_n(K)$ such that
			 $\sigma(a)=Udiag(a,\dots,a)U^{-1}$ for every $a\in K$.  In particular, using the previous item we get that
			$$
			K[\ut,\sigma, \ud]=K[\underline{y};Id.,U^{-1}\ud]
			$$
			where $\underline{y}=U^{-1}\ut$.
			\item  If $\sigma$ is diagonal, in other words if $\sigma=diag(\sigma_1,\dots, \sigma_n)$ then, for any $1\le i \le n$, the commutation rules are $t_ia=\sigma_i(a)t_i +\delta_i(a)$, where $\delta_i$ is a $\sigma_i$-derivation.  In this case, the Ore extension $A[\ut;\sigma,\ud]$ contains all the one variable Ore extensions $A[t_i;\sigma_i,\delta_i]$.
			
			\item  \label{End R} Let $A$ be a ring, $\alpha,\beta\in End(A)$, and $\gamma$ be an ($\alpha,\beta$)-derivation (i.e. $\gamma \in End(A,+)$ and, for any $a,b\in A$ we have $\gamma(ab)=\alpha(a)\gamma(b)+ \gamma(a)\beta(b)$).  We can check that the map  
			$$
			\sigma: A \longrightarrow M_2(A):\; a \mapsto
			\begin{pmatrix}
				\alpha(a) & \gamma(a) \\
				0 & \beta(a)
			\end{pmatrix} 
			$$
			is a homomorphism of rings.  If $x\in A$ we can define an  ($\alpha,\beta$)-derivation $\gamma$ via 
			$\gamma(a)=x\beta(a)-\alpha(a)x$.  Such an ($\alpha,\beta$)-derivation is called inner.  For more information on ($\alpha,\beta$)-derivations the reader may consult \cite{BCM}.  The map $\sigma$ above gives rise to the extension $A[(t_1,t_2)^t;\sigma]$.
			
			\item Let us notice that in the case of an upper triangular $\sigma$ of the form 
			$$
			\sigma(a)=\begin{pmatrix}
				\alpha(a) & \delta(a) \\
				0 & a
			\end{pmatrix}
			$$
			We get that $\delta:A\longrightarrow A$ is an $\alpha$-derivation and we can
			 consider both $R=A[t;\alpha, \delta]$ and $S=A[\ut; \sigma]$ where $\ut=\begin{pmatrix}
				t_1 \\
				t_2
			\end{pmatrix}$.
		Let us remark that the map $ \varphi : S\rightarrow R$ defined by $\varphi (t_1)=t, \, \varphi(t_2)=1$ and $\varphi (a)=a $ for all $a\in A$ is a ring homomorphism between $S$ and $R$. 
		
		\item We can generalize the points (5) and (6) above 
		as follows.  Let $A$ be a ring, $\alpha: A 
		\longrightarrow M_n(A)$, and $\beta: A 
		\longrightarrow M_l(A)$ be morphisms of rings.  A 
		map $\gamma: A \longrightarrow M_{n\times l}(A)$ is 
		an $(\alpha, \beta)$-derivation if $\gamma$ is 
		additive and satisfies 
		$\gamma(ab)=\alpha(a)\gamma(b)+ \gamma(a)\beta(b)$.  As above, this leads to
		$$
		\sigma: A \longrightarrow M_{n\times l}(A):\; a \mapsto
		\begin{pmatrix}
			\alpha(a) & \gamma(a) \\
			0 & \beta(a)
		\end{pmatrix} 
		$$
    and we get a multivariable extension with $n+l$ variables
    $A[(t_1,\dots, t_{n+l})^t;\sigma]$.  As a special case, 
    we can consider an inner $(\alpha,\beta)$-derivation via 
    a matrix $x\in M_{n \times l}(A)$ and define, for $a\in 
    A$, $\gamma(a)=x\beta(a)-\alpha(a)x$.
    We leave to the reader the analogue of (6).
   \end{enumerate}	
} 
\end{examples}
	
	\vspace{4mm}
	
	We now introduce the important notion of PMT.
 We keep our usual notation $S=A[\ut;\sigma,\ud]$.
	If $V$ is a left $S$-module,  then $V$ is also a left $A$-module and, for any $1\le i \le n$, the action of $t_i$  on $V$ must satisfy the following equality 
	\begin{equation} 
		(\ut a).v=(\sigma(a)\ut + \ud (a)).v ~~~~~~~ for~~ v \in V.  
	\end{equation}
	%Put~ simple~ formulas~
	This leads to the next definition.
	
	\begin{definition}
	\label{Definition of PMT}
		Let $V$ be a left $A$-module and
		$T_1,\dots,T_n \in \mathrm{End}(V,+)$ be such that, for $a\in A$ and $v\in V$, we define 
		\begin{equation} \label{eq 1}
			 T_i(a.v)=\sum_{j=1}^n \si_{ij}(a)T_j(v) + \de_i(a).v.  ~~\forall \; 1\le i \le n.
		\end{equation}

		A sequence of maps satisfying these equations will be called a 
		$(\si, \underline{\de})$-pseudo-multilinear transformation $((\si, \underline{\de})$-PMT, for short$)$ on $V$. 
\end{definition}

\vspace{3mm}

In other words, writing $T=(T_1,T_2,\dots, T_n)^t$ for a column of elements in $\mathrm{End}(V,+)$, we can write the equality in (cf. equation \ref{eq 1}) in a compact form as follows:
$$
T(a.v)=\si(a)T(v)+\underline{\de}(a)v.
$$
	
	\begin{examples}
		\label{examples of PMT}
		{\rm 
			\begin{itemize}
			
			\item[(a)] If we consider $\sigma =Id.$ and $\delta=0$ the maps $T_1, T_2,\dots , T_n$ are just usual linear maps on $V$.   In general our PMT form a sequence of maps.   Let us remark that these maps are, in general, not linear.
			\item[(b)] One can check that the sequence $\underline{\de}=(\de_1,\dots,\de_n)^t$ is a  PMT on $A$.  
%			What we just said is that there is a  one-to-one  correspondence between the left modules over $S$ and the set of PMTs over the left $A$-modules.
			
			\item[(c)]  Let $\ua=(a_1,\dots,a_n)^t$ be a column $\in A^n$ then the 
			PMT on $A$ defined as follows $T_{\ua}=(T_{a_1},\dots,T_{a_n})^t$ with 
		\begin{equation} \label{T_{a_i}(b)}
			T_{a_i}(b)=\sum_{j=1}^n\sigma_{ij}(b)a_j + \de_i(b).
		\end{equation}
	We can check that we indeed get a PMT defined over $A$. As we will see, this PMT is closely related to the evaluation at $\ua$.  This example already appeared in \cite{MP2022}, Definition 5.
		\item[(d)] Let us remark that if we consider $ \ua = (0, \dots, 0)^t \in A^n$, then the PMT $T_{\ua}$ is simply the map $(\delta_1, \dots, \delta_n)^t $.  
		%i.e $$T_{a_i}(b) = \de_i(b).$$
		
	\end{itemize}
				
		}
	\end{examples}

\vspace{4mm}
	
	As in the case of a single variable, we can associate a ring homomorphism to each PMT.  This is the purpose of the next proposition.
	
	\begin{proposition} \label{Pro for varphi and 1 1 correspondence}
		Let $T$ be a PMT defined on left $S$-module $V$. Then
		\item (1) The following map
			$$
		\varphi: S \rightarrow \mathrm{End}(V,+)~~\text{such that}~~      \varphi(f(\ut))=f(\underline{T}), 
		$$
		is a ring homomorphism.
		
		\item (2)	There is a 1-1 correspondence between the set of PMT's and the set of $S$-modules.
	\end{proposition}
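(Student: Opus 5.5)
The plan is to use the universal property of the free construction of $S$. By definition, $S$ has a left $A$-basis given by the free monoid $M$. The multiplication of $S$ is determined by concatenation in $M$ together with the commutation rules $t_ia=\sum_j\si_{ij}(a)t_j+\de_i(a)$. Here $T$ is a PMT on a left $A$-module $V$ (the statement says ``$S$-module'', but only the $A$-module structure is used in defining the PMT).

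For (1), first define $\varphi$ on monomials. For a word $m=t_{i_1}\cdots t_{i_k}\in M$, set $\varphi(m)=T_{i_1}\circ\cdots\circ T_{i_k}$. For $a\in A$, let $\varphi(a)=L_a$ be left multiplication by $a$ on $V$. Then extend additively: $\varphi(\sum\alpha_m m)=\sum L_{\alpha_m}\varphi(m)$. This is well defined because the monomials form a left $A$-basis, and by construction $\varphi(f(\ut))=f(\underline{T})$.

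To check multiplicativity, consider the free ring $F=A\langle t_1,\dots,t_n\rangle$, namely the ring freely generated by $A$ and the $t_i$. Its elements are sums of words alternating elements of $A$ and variables. The ring $S$ is the quotient of $F$ by the two-sided ideal $I$ generated by the elements $t_ia-\sum_j\si_{ij}(a)t_j-\de_i(a)$ and by the ring relations of $A$. Associativity of this quotient, with $M$ still a basis, is what Proposition \ref{Pro for assoc. and hom} records. The assignment $a\mapsto L_a$, $t_i\mapsto T_i$ extends uniquely to a ring homomorphism $\tilde\varphi:F\to\mathrm{End}(V,+)$. The defining identity of a PMT, $T_i\circ L_a=\sum_j L_{\si_{ij}(a)}\circ T_j+L_{\de_i(a)}$, says exactly that each generator of $I$ lies in the kernel of $\tilde\varphi$, so $\tilde\varphi$ factors through $S$. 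The factored map agrees with $\varphi$ on $A$ and on the monomials, hence everywhere.

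The main obstacle is justifying that $S\cong F/I$ with basis $M$. If one prefers to avoid this, there is a direct route. Show by induction on word length that $\varphi(m\cdot a)=\varphi(m)L_a$ for every $m\in M$ and $a\in A$, where $m\cdot a$ is rewritten in the basis using the commutation rule. The inductive step is: $t_i m' a = t_i(\sum_{m''} b_{m''}m'')$, and the PMT identity applied to $T_i L_{b}$ matches the rewriting $t_ib=\sum_j\si_{ij}(b)t_j+\de_i(b)$. It then follows that $\varphi(fg)=\varphi(f)\varphi(g)$ by bilinearity.

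For (2), send a PMT $(V,T)$ to the $S$-module structure $f\cdot v=\varphi(f)(v)$; part (1) makes this a module. Conversely, a left $S$-module $V$ is a left $A$-module by restriction. Define $T_i(v)=t_i\cdot v$. Applying the relation $t_ia=\sum_j\si_{ij}(a)t_j+\de_i(a)$ to $v$ gives precisely equation (\ref{eq 1}), so $T$ is a PMT. These two constructions are mutually inverse: the first followed by the second recovers $T_i=\varphi(t_i)$. The second followed by the first recovers the action, since the $S$-action is determined by the actions of $A$ and of the $t_i$, which generate $S$ as a ring.
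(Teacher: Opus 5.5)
Your proposal is correct and follows the same route as the paper. In both, the key point is that the PMT identity $T_i\circ L_a=\sum_j L_{\si_{ij}(a)}\circ T_j+L_{\de_i(a)}$ is exactly the image under $\varphi$ of the commutation rule $t_ia=\sum_j\si_{ij}(a)t_j+\de_i(a)$, and part (2) is the same back-and-forth construction via $t_i\cdot v=T_i(v)$. The paper only asserts that checking this relation suffices for multiplicativity, whereas your free-ring factorization, or more self-containedly your induction on word length, actually justifies that reduction.
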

	\begin{proof}
		(1)  The map $\varphi$ is additive and we only need to check that it is also multiplicative.
		We have, for every $a\in A$ and $1\le i \le n$, $T_iL_a=\varphi(t_ia)=\varphi(\sum_j\sigma_
		{ij}(a)t_j+\delta_i(a))=\sum_{j}\sigma_{ij}(a)T_j +L_{\delta_i(a)}$.
		
		(2) If $T=(T_1,\dots,T_n)$ is a PMT on a module $_AV$ we obtain a left $S=A[\ut,\sigma,\ud]$-module structure on $V$ by defining $t_i.v=T_i(v)$.  On the other hand when $_SV$ is a left $S$-module the actions of $t_1,\dots,t_n$ on $V$ give a PMT on $V$ as in the paragraph before the definition \ref{Definition of PMT}.
	\end{proof}
	If $_SV$ is
	a left $S$-module such that $_AV$ is free of dimension $l$ and if $B$ is a basis of $V$, the actions of $t_1,\dots,t_n$ on $V$ are completely described by $n$ matrices $\{\tau_1,\dots,\tau_n\}\subset M_l(A)$ expressing these action on the basis. These matrices are sufficient to describe the left $S$-module structure  of $V$.  
	% Explicitly if $v=\sum_{i=1}^{i=l}a_iv_i$ is the expression of $v\in V$ in the basis $B=\{v_1,\dots,v_l\}$ then
	%$$
	%t_iv=t_i(\sum \alpha_jb_j)=\tau_i
	%$$
	Suppose that $V_1$ and $V_2$ are two left $S=A[\ut;\sigma,\ud]$-modules such that both
	$_AV_1$ and $_AV_2$ are free with basis  
	$\beta_1=\{e_1,\dots,e_{n_1}\}$ and $\beta_2=\{u_1,\dots,u_{n_2}\}$ respectively.  We denote the matrices corresponding to these actions in the respective basis by $X_1,\dots,X_n\in M_{n_1\times n_1}(A)$ and $Y_1,\dots, \,Y_n \in M_{n_2\times n_2}(A)$. If $V_1\stackrel{\varphi}{\rightarrow} V_2 $ is a left $A$-morphism, we let $M\in M_{n_2 \times n_1}(A)$ to be the matrix representing $\varphi$ in the basis $B_1$ and $B_2$.
	\paragraph{}	
	Now, suppose that $S=A[\ut; \sigma, \ud]$  is a multivariate Ore extension.
	For $i=1,2$, let $T_i=(T_{i1},\dots,T_{in})^t$ be $(\si,\de)$-PMT defined on $V_1$ and $V_2$, respectively. If $\varphi \in Hom_A(V_1,V_2)$ is an $A$-module homomorphism, then  $M\in M_{n_1\times n_2}(A)$, $X=(X_1,\dots,X_n)\in M_{n_1\times n_1}(A)$ and $Y=(Y_1,\dots,Y_n) \in M_{n_2\times n_2}(A) $ denote 
	matrices representing $\varphi$, $T_1$ and $T_2$ 
	respectively in the appropriate basis $\beta_1$ and $\beta_2$.
	Let $_SV_1$ and $_SV_2$ be the left $R$-module structures corresponding to $T_1$ and $T_2$, 
	respectively. We have the following theorem:

	\begin{theorem} 	\label{Pro ring of endo}
		The following conditions are equivalent: 
		\begin{enumerate}
			\item[(i)] $\varphi \in Hom_S(V_1,V_2)$;
			\item[(ii)]  $\varphi T_{1i}=T_{2i} \varphi$, for every $1\le i \le n$;
			\item[(iii)] $X_iM=\sum_j\sigma_{ij}(M)Y_j+ \delta_i(M)$ for every $1\le i \le n$.
			%		\item[(iv)] $B\in \ker(T_{C_2}-L_{C_1})$ where $T_{C_2}$ (resp. 
			%		$L_{C_1}$) stands for the pseudo-linear transformation 
			%		(resp. the left multiplication) induced by $C_2$ (resp. $C_1$) on 
			%		$M_{n_1\times n_2}(A)$ considered as a left $M_{n_1}(A)$-module.
		\end{enumerate}
	\end{theorem}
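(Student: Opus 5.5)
The plan is to prove (i)$\Leftrightarrow$(ii) abstractly, using the correspondence of Proposition \ref{Pro for varphi and 1 1 correspondence}, and then (ii)$\Leftrightarrow$(iii) by translating (ii) into matrices in the bases $\beta_1,\beta_2$.

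\textbf{(i)$\Leftrightarrow$(ii).} By Proposition \ref{Pro for varphi and 1 1 correspondence}(2), the $S$-module structures are given by $t_i.v=T_{1i}(v)$ on $V_1$ and $t_i.w=T_{2i}(w)$ on $V_2$.
\begin{itemize}
\item If $\varphi$ is $S$-linear, then $\varphi(t_i v)=t_i\varphi(v)$, which is exactly $\varphi T_{1i}=T_{2i}\varphi$.
\item Conversely, suppose $\varphi$ is $A$-linear and commutes with every $T_{1i}$ and $T_{2i}$. Then $\varphi$ commutes with the action of every monomial $m\in M$, by induction on the length of $m$.
\item Additivity and $A$-linearity then give $\varphi(f.v)=f.\varphi(v)$ for every $f=\sum_m \alpha_m m\in S$.
\end{itemize}
Since $S$ is generated as a ring by $A$ and $t_1,\dots,t_n$, no relations need to be checked.

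\textbf{(ii)$\Leftrightarrow$(iii).} Both sides of (ii) are additive maps satisfying compatible twisting rules, so it suffices to evaluate them on the basis vectors $e_k$ of $\beta_1$.
\begin{itemize}
\item Fix conventions: row vectors of coordinates, $\varphi(e_k)=\sum_l M_{kl}u_l$, $T_{1i}(e_k)=\sum_p (X_i)_{kp}e_p$, $T_{2i}(u_l)=\sum_q (Y_i)_{lq}u_q$. This is consistent with the product $X_iM$ appearing in (iii) and makes $M$ of size $n_1\times n_2$, as in the paragraph preceding the theorem.
\item Left side: $\varphi T_{1i}(e_k)=\sum_p (X_i)_{kp}\sum_l M_{pl}u_l$, whose coefficients are the $k$-th row of $X_iM$.
\item Right side: using the PMT rule (\ref{eq 1}),
$$T_{2i}\Big(\sum_l M_{kl}u_l\Big)=\sum_l\Big(\sum_j \sigma_{ij}(M_{kl})T_{2j}(u_l)+\delta_i(M_{kl})u_l\Big),$$
whose coefficients are the $k$-th row of $\sum_j\sigma_{ij}(M)Y_j+\delta_i(M)$. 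Here $\sigma_{ij}$ and $\delta_i$ act entrywise on $M$.
\end{itemize}
Comparing coefficients in the free basis $\beta_2$ gives (iii) row by row. Conversely, (iii) gives equality on each $e_k$.

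To pass from the basis to all of $V_1$, observe that $\varphi T_{1i}$ and $T_{2i}\varphi$ satisfy the same rule: for $\Psi=(\varphi T_{1i})_i$ or $\Psi=(T_{2i}\varphi)_i$ we have $\Psi(a v)=\sigma(a)\Psi(v)+\ud(a)\varphi(v)$. The difference $D$ therefore satisfies $D(av)=\sigma(a)D(v)$, so $D$ vanishes on all of $V_1$ once it vanishes on the $e_k$.

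\textbf{Main obstacle.} The only delicate point is bookkeeping: choosing the row/column convention so that the formula comes out exactly as $X_iM=\sum_j\sigma_{ij}(M)Y_j+\delta_i(M)$, with $\sigma_{ij}$ applied entrywise. I will fix the row-vector convention above and verify the index placement carefully. If the statement's indexing turns out to match a column convention instead, I will transpose accordingly, which may change the stated size of $M$.
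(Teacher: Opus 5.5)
Your proposal is correct and takes essentially the paper's route: (i)$\Leftrightarrow$(ii) comes from identifying the action of $t_i$ with $T_{1i}$ and $T_{2i}$. Then (ii)$\Leftrightarrow$(iii) comes from computing $\varphi T_{1i}$ and $T_{2i}\varphi$ on the basis $\beta_1$ via the PMT rule and comparing coefficients in $\beta_2$; your row-vector convention gives (iii) exactly, with $M\in M_{n_1\times n_2}(A)$. You also supply two steps the paper leaves implicit: extending from the $t_i$ to all of $S$, and extending from the $e_k$ to all of $V_1$ via $D_i(av)=\sum_j\sigma_{ij}(a)D_j(v)$.
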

	\begin{proof}
		%	(ii)$\Leftrightarrow$ (iii) It is clear that the matrix 
		%	corresponding to $\varphi T_{1,i}$ is the 
		%	$n_2 \times n_1$ matrix $X_iM$.  Let 
		%	$\beta_1=\{v_1,\dots, v_{n_1}\}$ and $\beta_2=\{w_1, \dots, w_{n_2}\}$.
		%	For $1\le k \le n_1$ and $1\le l \le n_2$, we have 
		%	$$
		%	(X_iM)_{l,k}=(\varphi(T_{1,i}(v_k)))_l=(T_{2i}\varphi(v_k))_l=T_{2i}(\sum_jM_{jk}w_j)_l=$$
		%	$$ \sum_j	\sum_s\sigma_{is}(M_{jk})T_{2s}(w_j)+\delta_i(w_j)=$$
		Firstly, we have 
		\begin{align*}
			(X_iM)_{(l,k)} &=(\sum_{js}\sigma_{is}(M_{jk}) T_{is}(w_{j}) + \delta_{i}(M_{jk}) w_{j})_{l} \\& = (\sum_{j,s}\sigma((M_{ik})_{js}) \sum_{p=1}^{n_{2}}(Y_{s})_{pj}w_{p} + \delta_{i}(M_{jk})w_{j})_{l} 
			\\&= \sum_{j,s} \sigma((M_{jk})_{is}) (Y_{s})_{lj} + \delta_{i}(M_{lk})
			\\&= \sum_{s}(\sum_{j} \sigma_{is}(M_{k})(Y_{s})_{lj}) + \delta_{i}(M_{lk}) 
			\\&= ((\sum_{s} \sigma_{is}(M) Y_{s}) + \delta_{i}(M))_{lk} 
			 = \sum_{s} (\sum_{j} \sigma_{is}(M_{lj})(Y_{s})_{jk})
		\end{align*}
		$ \text{Also},~ (\varphi\circ T_{i1})_{lk}  = (\varphi(T_{i1}(v_{k})))_{l} = \varphi(\sum_{j=1}^{n_{2}}(X_{i})_{jk}v_{j})_{l}
			= (\sum_{j=1}^{n_{2}}(X_{i})_{jk} \varphi(v_{j}))_{l} 
		 = (\sum_{j=1}^{n_{2}}(X_{i})_{jk} (\sum_{s}M_{sj}w_{s}))_{l} 
			 = \sum_{j=1}^{n_{2}}(X_{i})_{jk} M_{lj}. $\\
	$ \text{Now  (i) $\Leftrightarrow$ (ii)}~
			\varphi (t_{i} . v_{j}) = t_{i}. \varphi(v_{j})  \Leftrightarrow \varphi(T_{1i}(v_{j})) = T_{i2}(\varphi(v_{j}))   \Leftrightarrow (\varphi \circ T_{1i}(v_{j})  = (T_{i2} \circ \varphi)(v_{j}).$ 
		\\(ii)$\Leftrightarrow$ (iii) 
		$M(\varphi \circ T_{1i})_{lk} = (\varphi \circ T_{1i})(v_{l})_{k} = (M(T_{1i})M(\varphi))_{lk} = (X_{i}M)_{lk}$
		
		On the other hand,\begin{align*}  M(\varphi \circ T_{1i})_{lk} &= M(T_{2i} \circ \varphi)_{lk}  = \sum_{k} ((T_{2i} \circ \varphi)(v_{l}))_{k})w_{k} 
		\\&	= \sum_{k} (T_{2i}(\varphi(v_{l})))_{k}w_{k} 
			 =  (T_{2i}(\sum_{j}M_{lj}w_{j}))_{k} 
			\\ &= (\sum_{j}\sum_{s} \sigma_{is} (M_{lj})T_{2i}(w_{j})+ \delta_{i}(M_{lj})w_{j})_{k} 
			\\ &= (\sum_{j}\sum_{s} \sigma_{is} (M_{lj}) \sum_{m}((Y_{i}))_{jm}(w_{m})+ \delta_{i}(M_{lj})w_{j})_{k}
			\\ & =  (\sum_{j}\sum_{s}\sum_{m} \sigma_{is}(M_{lj})(Y_{i})_{jm}w_{m} + \sum_{j} \delta_{i}(M_{lj})w_{j})_{k} 
			\\ &= \sum_{j,s}\sigma_{is}(M_{lj})(Y_{i})_{jk} + \delta_{i}(M_{lk}) 
			 = \sum_{j,s} \sigma_{is}(M)_{lj}(Y_{i})_{jk} + \delta_{i}(M_{lk}) 
			\\ & = \sum_{s}(\sigma_{is}(M)Y_{i})_{lk} + \delta_{i}(M)_{lk} 
			 = (\sum_{s}\sigma_{is}(M)Y_{i} + \delta_{i}(M))_{lk} . 
		\end{align*}
	\end{proof}
\vspace{5mm}

A classical feature of one variable Ore extensions is the fact that $R=K[t;\sigma,\delta]$ is embeddable in a division ring when $K$ is itself a division ring.
Since $R$ is a left principal domain, this is immediate.   Although in our more general setting $S=K[\ut,\sigma,\ud]$ is not even Noetherian, it is also embeddable in a division ring.  We will not use the following theorem and hence mention it with a sketch of proof. 
\begin{theorem}
	Let $K$ be a division ring and $S=K[\ut,\sigma,\ud]$.  Then $S$ is embeddable in a division ring. 
\end{theorem}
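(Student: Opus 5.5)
We would prove that $S$ is a free ideal ring (a fir) and then invoke Cohn's theorem: every semifir has a universal field of fractions, so in particular it embeds in a division ring (cf. \cite{C}). The key fact is that $S=K[\ut;\sigma,\ud]$ behaves like a free algebra in the variables $t_1,\dots,t_n$. Indeed, the commutation rule $t_i a=\sum_j\sigma_{ij}(a)t_j+\delta_i(a)$ lets us move coefficients to the left. So, by the definition, $S$ is a free left $K$-module with basis the free monoid $M$.

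First we put a degree function on $S$, namely the length of monomials in $M$. We check that $\deg(fg)=\deg f+\deg g$. For this we use that $\sigma:K\to M_n(K)$ is a ring homomorphism with $\sigma(1)=I_n$. Hence for $a\ne0$ the matrix $\sigma(a)$ is invertible, with inverse $\sigma(a^{-1})$. Consequently the leading part of $m\cdot a$ for $m\in M$ of length $r$ is $\sigma^{\otimes r}(a)$ applied to the degree-$r$ monomials. This is an invertible "matrix" acting on the $K$-span of the words of length $r$, so it cannot vanish. We conclude that $S$ is a domain and that the leading form of $fg$ is the product of the leading forms. Here the leading forms are computed in the associated graded ring $\mathrm{gr}(S)=K[\ut;\sigma]$, where the derivation disappears.

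Next we establish the weak algorithm with respect to this degree, following Cohn's treatment of free algebras and of the tensor ring $K\langle X\rangle$ over a division ring. Concretely, we must show the following. Suppose a family $f_1,\dots,f_r$ is right $\deg$-dependent, meaning there are $g_k$ with $\deg(\sum f_kg_k)<\max_k(\deg f_k+\deg g_k)$. Then some $f_k$ is right $\deg$-dependent on those of smaller or equal degree. We reduce this to the graded ring. Passing to leading forms, it suffices to verify the weak algorithm in $K[\ut;\sigma]$ for homogeneous elements. That ring is a tensor ring $T_K(U)$ of the $K$-bimodule $U=\bigoplus K t_i$, where the right action is twisted by $\sigma$. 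It is well defined as a bimodule precisely because $\sigma$ is a homomorphism. Moreover $U$ is free as a right $K$-module, since the $\sigma(a)$ are invertible and the $t_j$ form a right basis as well. Cohn shows that tensor rings of bimodules over a division ring satisfy the weak algorithm, via unique factorization of monomials (the "transduction" argument). We then lift the statement from $\mathrm{gr}(S)$ to $S$ by the standard filtered argument. The weak algorithm depends only on leading terms, so it passes from $\mathrm{gr}(S)$ to $S$.

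Finally, a filtered ring with the weak algorithm and with $S_0=K$ a division ring is a fir (Cohn). A fir is in particular a semifir, so it has a universal field of fractions, and $S$ embeds in a division ring. The main obstacle is the middle step: checking that the weak algorithm for the twisted tensor ring holds when $\sigma$ is a non-diagonal matrix-valued map. Our first attempt is to realize $K[\ut;\sigma]$ literally as $T_K(U)$ and quote Cohn's theorem for tensor rings over a skew field. The only point to verify is that $U$ is free of rank $n$ on both sides, and this uses exactly the invertibility of $\sigma(a)$ for $a\neq 0$.
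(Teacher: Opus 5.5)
Your proposal is correct and follows essentially the paper's route: length filtration, weak algorithm, fir, then Cohn's universal field of fractions for semifirs. Your reduction of the weak algorithm to the associated graded ring $K[\ut;\sigma]\cong T_K(U)$ usefully fills in what the paper's sketch leaves implicit. One side claim is false and should be dropped: the $t_j$ need not form a right $K$-basis of $U$. For instance, with $n=1$ and $\sigma$ a non-surjective endomorphism, the right span of $t$ is only $\sigma(K)t$. Nothing depends on that claim, because Cohn's weak-algorithm theorem for tensor rings over a skew field holds for an arbitrary bimodule, and the transduction argument only needs the words of a given length to form a left $K$-basis, which holds by definition.
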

\begin{proof}
	We first show that the ring $S$ is filtered via the length of monomials.  Moreover, this filtration satisfies the weak algorithm and hence is a fir (cf. Section 2.4, in particular Theorem 2.4.4 and Theorem 2.4.6 in \cite{PMC}).  We conclude that $S$ is indeed embeddable in a division ring (cf. Corollary 7.5.14 in \cite{PMC}). 
\end{proof}
	\section{Center of S and Semi-invariant polynomials}
%	~~~In this section we introduce the center for 
	 ~~~The purpose of the next proposition is to study the center of S so, we consider K a division ring and $ S = K[\ut, \sigma, \ud] $ where $ \ut = (t_{1}, ..., t_{n}) , \sigma = (\sigma_{ij})$ and $ n > 1. $ Then, 
	\begin{proposition}
		\label{Pro for the center of $S$}
		 The center Z(S) of S is 
		\begin{center}
			 $Z(K)_{(\sigma, \ud)} = \{a \in K \mid ab = ba~ \forall~ b \in K ; \si(a) = a. I_{n} , \delta_i (a) = 0, ~ \forall \; 1\le i \le n\}$  
		\end{center} 
	\begin{proof} 
	%Let $P(t) = \sum_{\omega \in \varOmega} a_{\omega} , \omega \in Z(S) $, where $\varOmega $ is the semigroup generated by $ t_{1}, ..., t_{n}.$ We order $ \varOmega$ by the deg lex order with $ t_{1} <...< t_{n} $ 
%			\\Let $ \alpha\omega $ be the leading term of $ P(t)$ $( \alpha \in K^{*}, \omega \in \varOmega)$. Since $ P(t) $ is central and deg lex is a term order. $\forall ~i \in \{1, ..., n\}$ 
%			$$ t_{i} \alpha\omega = \alpha\omega t_{i} , ~~~~ \forall ~ 1 \le i  \le n $$
%			$$ \sum_j \delta_{ij}(\alpha) t_{j}\omega + \delta_i(\alpha) \omega = \alpha\omega t_{i} $$
%			Comparing leading terms we have 
%			$$ \delta_{in} (\alpha) t_{n}\omega = \alpha \omega t_{i} ~~~ \forall~ 1 \le i \le n $$
%			$$ \alpha^{-1} \sigma_{in}(\alpha) t_{n} \omega = \alpha \omega t_{i} $$
%			From this we conclude $ \omega = 1 $
%			$$ \alpha^{-1} \sigma_{in}(\alpha) t_{n} = t_{i}  ~~~~~ \forall \; 1\le i \le n $$
%			$ ; \sigma_{in}(\alpha) = 0 , ~\forall~ \sigma \in \{1, ..., n-1\} ~ and ~ \sigma_{nn} (\alpha) = \alpha $ 
Let $P(t) = a_s m_s + ... + a_0m_0 \in Z(R)$ with $m_0 = 1$.  For all $0 \leq i \leq n$, we have $t_i P(t) = P(t) t_i. $ The number of monomials on the right hand side is $ s + 1 $ so, the same must be true on the left hand side. For any $ 1 \leq j \leq s, $ we have 
\begin{equation} \label{eq 1}
	t_i a_j m_j = \sum_{k=1}^{n} \sigma_{ik}(a_j) t_k m_j. 
\end{equation} 
Since $\sigma(a_j)$ is an invertible matrix, we have for, $(i,j) \in \{1,\dots, n\} \times \{ 0,\dots, s\}$, ($\sigma_{i1}(a_j),\dots ,\sigma_{in}(a_j))\ne (0,\dots,0)$.
Since $R m_j \cap  Rm_i=0, ~ i \neq j $, the non zero terms of the sum in \ref{eq 1} will not cancel with terms coming from $t_i a_l m_l $ for any $l \neq j,~ 0 \leq l \leq s$. Since the number of non zero monomic on the left hand side is $s + 1$ we can concluded that the  sum in (\ref{eq 1}) has only one term: 
$t_i a_j m_j = \sigma_{ik} (a_j) t_k m_j. $ for some $1 \leq k \leq n.$  

This terms must be equal to one of the non constant terms in $P(t) t_i $. i.e. we must have $\sigma_{ik}(a_j) t_km_j = a_r m_r t_i , ~ 1 \leq r \leq s$ and hence $ t_km_j = m_r t_i $. 
So for every $1\le i \le n$ and for every $1\le j \le s$, we have that $t_i$ divides $m_j$ on the right.  Since the variables $t_1, \dots, t_n$ are independant this impossible and we conclude that $ P(t) = a_0 \in K \Rightarrow P(t) \in K$ , hence $P(t) \in Z(K).$ With $p(t) = a \in Z(K), ~ t_i a = a t_i \Rightarrow \sigma(a) = a I.$ and we get that $P(t) \in Z(K)_{(\sigma, \delta)}$ as described that fact that $Z(K)_{(\sigma, \delta)} \subset Z(R)$ is obvious.

			Now  $ P(t) \in Z(S) \Longrightarrow P(t) \in K $
			So, $ P(t) = \alpha \in Z(S)$ 
			, $ \forall ~i$ we have $ t_{i} \alpha = \alpha t_{i} \Longrightarrow \sum_j \sigma_{ij} (\alpha) t_{j} + \delta_i (\alpha) = \alpha t_i$ 
			\\ $ \forall ~i, j \in \{1, ..., n\}, \sigma_{ij} (\alpha) = 0 ~ if ~ i \neq j $
			; $ \sigma (\alpha) = \left( \begin{array}{ccc}
				\alpha	& ...  &0 \\
				&\ddots& \\
				0	& ... & \alpha
			\end{array}\right) , \sigma_{ii}(\alpha) = \alpha $
			\textit{Moreover}, $ \delta_i (\alpha) = 0 ~~ \forall~ i$ and $\alpha a = a \alpha ~~~ \forall~ a \in K \Longrightarrow \alpha \in Z(K)_{(\sigma, \ud)} .$
		\end{proof}
	\end{proposition}
	
	\begin{definition}
		A nonzero polynomial $p(\ut)\in S$ is right semi-invariant if for any $a\in K$ there exists an $a'$ in $K$ such that $p(\ut)a=a'p(\ut)$.
	\end{definition}
	
	\begin{lemma}
		\label{semi-invariant poly and its map}
		Suppose that $p(\ut)\in S$ is right semi-invariant.  Then there exists a homomorphism $ \varphi $ from $K$ to $K$ such that $p(\ut)a=\varphi(a)p(\ut)$.
	\end{lemma}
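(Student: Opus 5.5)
The plan is to show that for a right semi-invariant $p(\ut)\neq 0$ the element $a'$ is uniquely determined by $a$, and then to read off the homomorphism properties from uniqueness. Here $K$ is a division ring, as in the proposition on the center. The key tool is that $S=K[\ut;\sigma,\ud]$ is a free left $K$-module on the monomials of $M$. Write $p(\ut)=\sum_{m}\alpha_m m$, and fix a monomial $m_0$ with $\alpha_{m_0}\neq 0$; a convenient choice is a monomial of maximal length.

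\emph{Uniqueness.} Suppose $a'p(\ut)=a''p(\ut)$. Then $(a'-a'')p(\ut)=0$. The coefficient of $m_0$ in this product is $(a'-a'')\alpha_{m_0}$, because left multiplication by a scalar of $K$ acts coefficientwise on the basis of monomials. Since $K$ has no zero divisors and $\alpha_{m_0}\neq0$, we get $a'=a''$. So $\varphi(a):=a'$ is a well-defined map $K\to K$. It can also be computed explicitly: $\varphi(a)=\beta_{m_0}\alpha_{m_0}^{-1}$, where $\beta_{m_0}$ is the $m_0$-coefficient of $p(\ut)a$.

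\emph{Homomorphism properties.} These follow from uniqueness and associativity of $S$, the latter guaranteed by Proposition \ref{Pro for assoc. and hom}.
\begin{itemize}
\item Additivity: $p(\ut)(a+b)=p(\ut)a+p(\ut)b=(\varphi(a)+\varphi(b))p(\ut)$, so $\varphi(a+b)=\varphi(a)+\varphi(b)$.
\item Multiplicativity: $p(\ut)(ab)=(p(\ut)a)b=\varphi(a)p(\ut)b=\varphi(a)\varphi(b)p(\ut)$, so $\varphi(ab)=\varphi(a)\varphi(b)$.
\item Unit: $p(\ut)\cdot 1=1\cdot p(\ut)$ gives $\varphi(1)=1$.
\end{itemize}
In each case uniqueness forces the stated identity. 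As a remark beyond the statement, $\varphi$ is injective, since $K$ is a division ring and $\varphi(1)=1\neq 0$.

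The only step requiring care is uniqueness. It rests on two facts: $S$ is free as a left $K$-module on $M$, so that comparing coefficients of a fixed monomial is legitimate, and $K$ has no zero divisors. Both hold by the definition of $S$ and the standing hypothesis that $K$ is a division ring. So I expect no real obstacle: the argument is a routine coefficient comparison.
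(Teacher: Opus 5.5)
Your proposal is correct and follows the paper's route: uniqueness of $a'$ lets one define $\varphi(a)=a'$, and the ring-homomorphism properties then follow from uniqueness together with distributivity and associativity in $S$. You also supply the details the paper leaves implicit, namely the coefficient comparison on a fixed monomial that proves uniqueness (the paper only asserts it) and the explicit checks of additivity, multiplicativity and $\varphi(1)=1$.
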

	\begin{proof}
		Let us notice that for $a\in K$, there exists a unique element $a'\in K$ such that  $p(t)a=a'p(t)$.  Since the element $a'$ is unique we can define the map $\varphi:K\rightarrow K$ such that $\varphi(a)=a'$.  It is easy to check that $\varphi$ is a ring homomorphism.  
	\end{proof}
	
	\begin{examples} \label{example for semi-invariant polyomial}
		{\rm 
			(1)  Let $K$ be a division ring, and consider a map $\sigma=diag(\sigma_1,\sigma_2)$ and $\delta=(\delta_1,\delta_2)=(0,0)$.   Assume that $\sigma_1^l=\sigma_2^l$, then one can check that $t_1^l+t_2^l$ is a semi-invariant polynomial in $S=K[\ut,\sigma,\ud]$.
			
			(2) Let $K$ be a division ring of characteristic $2$, and consider maps $\sigma=diag(Id,Id)$ and $\delta=(\delta_1,\delta_2)$, where be two usual derivations on $K$ are such that $\delta_1^2=\delta_2^2=0$, then $t_1^2+t_2^2$ is a semi-invariant polynomial in $S=K[\ut,\sigma,\ud]$. 
		}
	\end{examples}

	Let us remark that $a'$ is unique.  
	In the case when $n=1$ these semi-invariant polynomials are at the heart of the structure theory since such a nonconstant semi-invariant polynomial exists if and only if the Ore extension is not simple.
	In our general frame, the semi-invariant notion is too rigid to give any structure result.  Nevertheless in some particular cases, these polynomials exist and their zeroes behave nicely.  We will analyze this behavior in the next section and now we will just construct these polynomials.  
	In the case when $\sigma$ is diagonal, say $\sigma = diag (\sigma_1,\dots, \sigma_n)$
	we can search the semi-invariant polynomials in the subrings $K[t_i,\sigma_i, \delta_i]$, where $1\le i \le n$. 
	
	\vspace{3mm} 
	\begin{theorem}\label{Th multivariate Ore extension}
		Let $S=K[\ut,\sigma,\ud]$ be a multivariate Ore extension such that there exists $1\le i \le n$ with $\sigma=diag (\sigma_1,\dots, \sigma_n)$ where $\sigma_i\in Aut(K)$.  Then the skew polynomial ring $S_i=K|t_i,\sigma_i,\delta_i]$ is contained in $S$.
		We assume that there exists a nonconstant semi-invariant polynomial $p_i(t_i)\in S_i$.  Then 
		\begin{enumerate}
			\item For $1\le i \le n$, the ring $S_i$ is not simple if and only if there exists a monic nonconstant semi-invariant polynomial of minimal non zero degree.
			\item Suppose that $p_i(t_i)$ is as in (1) then all the monic semi-invariant polynomials contained in $S_i$ are of the form $\sum_{j=0}^l a_jp_i(t_i)^j$ for some $a_j \in K$ with $a_l=1$.   
		\end{enumerate}
	\end{theorem}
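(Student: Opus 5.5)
The plan is to first check that $S_i$ sits inside $S$, and then reduce both items to the theory of one-variable Ore extensions $R=K[t;\tau,d]$ with $\tau\in Aut(K)$.

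\textbf{Step 1: the embedding.} Since $\sigma$ is diagonal, Examples \ref{examples CV and others}(4) gives the commutation rule $t_ia=\sigma_i(a)t_i+\delta_i(a)$. By Proposition \ref{Pro for assoc. and hom}, $\delta_i$ is then a $\sigma_i$-derivation. Consider the $K$-subspace of $S$ spanned by the monomials $t_i^k$. It is closed under multiplication by this rule. The monomials $t_i^k$ are $K$-independent in $S$, since they are distinct elements of the free monoid $M$. Hence this subspace is a copy of $K[t_i;\sigma_i,\delta_i]$. Being semi-invariant with respect to $K$ is a statement only about commutation with scalars, so it means the same thing in $S_i$ and in $S$.

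\textbf{Step 2: item (1).} Here I argue entirely inside $R=S_i$, using that $R$ is a left and right principal ideal domain: $K$ is a division ring and $\sigma_i$ is an automorphism, so Euclidean division works on both sides.

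Suppose first that $R$ is not simple, and let $I$ be a nonzero proper two-sided ideal. Then $I=Rp$ with $p$ monic of minimal degree in $I$, and $\deg p\ge 1$ because $I$ is proper. For $a\in K$, the element $pa$ lies in $I$, so $pa=qp$. Comparing degrees forces $q\in K$, and comparing leading coefficients gives $q=\sigma_i^{\deg p}(a)$. So $p$ is semi-invariant. Among all monic nonconstant semi-invariant polynomials, one of minimal degree then exists.

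Conversely, let $p$ be monic, nonconstant and semi-invariant. I expect the main obstacle to be that semi-invariance alone does not yield a two-sided ideal; one also needs $pt_i\in Rp$. My first attempt will be to choose $p$ of minimal degree among monic nonconstant semi-invariants and divide: $pt_i=(ct_i+b)p+r$ with $\deg r<\deg p$. I would then show that $r$ is itself semi-invariant, or zero, by multiplying on the right by $a\in K$ and using the identities $p\,a=\sigma_i^{\deg p}(a)p$ and $t_ia=\sigma_i(a)t_i+\delta_i(a)$. Minimality then forces $r$ to be a constant. If that fails, I would fall back on the standard fact (Lam--Leroy) that the hypothesis in the statement, that some nonconstant semi-invariant exists, already gives a nonzero proper ideal $RpR$. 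The point is that $RpR=\sum_k Rpt_i^k$, and this cannot contain $1$ because every element of it has degree at least $\deg p$. This last degree argument is the one I would commit to for showing $R$ is not simple.

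\textbf{Step 3: item (2).} Let $f$ be monic semi-invariant and let $p=p_i$ have minimal degree $m$. Divide on the right: $f=qp+r$ with $\deg r<m$. For $a\in K$ we have $fa=\varphi_f(a)f$ and $pa=\sigma_i^m(a)p$. Expanding $fa$ and comparing the parts that are right multiples of $p$ with the remainders, and using uniqueness of division, gives two conclusions. First, $r$ is semi-invariant or zero, so by minimality $r\in K$. Second, $qp\,a=\varphi_f(a)qp$, which after cancelling $p$ in the domain shows $q$ is semi-invariant of degree $\deg f-m$. Inducting on $\deg f$, I get $q=\sum_j a_jp^j$. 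Since $p$ is semi-invariant, $a_jp^j\cdot p=a_jp^{j+1}$ is already in the required form, so $f=\sum_j a_jp^{j+1}+r$. Monicity of $f$ forces the top coefficient to be $1$, which completes the proof.
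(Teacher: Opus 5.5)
\textbf{There is a genuine gap in the converse half of (1).} Your Step 1, the forward half of (1), and your induction for (2) are fine; the paper gives no argument and only cites the one-variable results of Lam--Leroy et al. (In (2) you implicitly use surjectivity of $\sigma_i^m$ to get from $q\,\sigma_i^m(a)=\varphi_f(a)q$ to $qK\subseteq Kq$; the hypothesis $\sigma_i\in Aut(K)$ supplies this.) The gap matters because the standing hypothesis already gives a nonconstant semi-invariant, so the right-hand side of (1) holds automatically and (1) really asserts that $S_i$ is not simple.

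The argument you commit to, that every element of $RpR=\sum_k Rpt_i^k$ has degree at least $\deg p$, is false. Degree is additive on products, but leading terms of a sum can cancel. Take $K=\mathbb{F}_\ell(x)$, $\sigma_i=\mathrm{id}$ and $\delta_i=d/dx$. Since $\delta_i^\ell=0$, the element $t_i^\ell$ is central, so $p=t_i^\ell+x$ commutes with $K$ and is a monic semi-invariant. It has minimal degree, because $t_i^kx-xt_i^k=kt_i^{k-1}$, so anything commuting with $x$ involves only powers $t_i^k$ with $\ell\mid k$. Yet $pt_i-t_ip=xt_i-t_ix=-1$, so $RpR=R$. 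The ring is still not simple because of $t_i^\ell$, but $RpR$ does not witness it. Your first attempt fails on the same example: minimality correctly gives $pt_i=(t_i+b)p+c$ with $b,c\in K$, but here $c=-1\neq 0$.

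\textbf{The missing idea is to use a power of $p$.} Let $m=\deg p$ and $pt_i=(t_i+b)p+c$. Induction on $j<m$ gives
$$pt_i^j=(t_i+b)\,pt_i^{j-1}+ct_i^{j-1}\equiv g_j \pmod{Rp},\qquad \deg g_j<j.$$
Combined with $pa=\sigma_i^m(a)p$, this shows that left multiplication by $p$ sends the image in $R/Rp$ of the polynomials of degree $<j$ into the image of those of degree $<j-1$. Hence $p^m$ annihilates $R/Rp$, i.e. $p^mR\subseteq Rp$. So $Rp^mR$ is a nonzero two-sided ideal contained in $Rp\neq R$, and $R$ is not simple. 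In the example above, $p^\ell=t_i^{\ell^2}+x^\ell$ is indeed central. With this replacement for your degree argument, the proof of (1) goes through.
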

	\begin{proof}
		These results are extracted from (cf. \cite{LM}).
	\end{proof}
	
	\begin{examples} \label{examples for Semi invariant and others}
	{\rm 
		\begin{enumerate}
			\item if $\sigma=diag(\sigma_1,\sigma_2,\dots, \sigma_n)$ and $\delta_i=0$ then, for any $a\in A$, $t_ia=\sigma_i(a)t_i$.  This shows that $t_i$ is semi-invariant (even invariant).
			\item If there exists $1\le i \le n$ such that for every $1\le j \le n$ we have $\sigma_{ij}=\sigma_{i}\delta_{ij}$ (where $\delta_{ij}$ stands for the classical Kroeneker symbol) and $\delta_i$ is quasi algebraic (cf. \cite{LM})  then there exists a monic invariant polynomial $p(t_i)$, say of degree $l$, such that $p(\delta_i)(x)=\sigma_i^l(x)p(t_i)$ so that the polynomial $p(t_i)$ is semi-invariant.  
			\item Let $\alpha,\beta,\gamma$ be as in Examples (cf. \ref{examples CV and others}) part number (\ref{End R})). Suppose that $\alpha\gamma=-\gamma\beta$ then one can check that $t_1^2$ and $t_2^2$ are semi-invariant polynomials in $A[(t_1,t_2)^t,\sigma]$.
	
	\item Let us now give an example of a multivariate Ore extension $S$ that is simple.   This will be very similar to the Weyl algebra construction.
	We start with the field of rational fractions $k(x)$ over a field $k$ of characteristic zero and define $\sigma :k(x) \rightarrow M_2(k(x))$ and $\delta_1=\delta_2 $ via
	$$
	\sigma(p(x))=\begin{pmatrix}
		p(x) & 0 \\
		0 & p(x)
	\end{pmatrix} \quad {\rm and} \quad \begin{pmatrix}
		\delta_1(p(x)) \\
		\delta_2(p(x))
	\end{pmatrix} =\begin{pmatrix}
		p'(x) \\
		p'(x)
	\end{pmatrix}
	$$
	We will show that $S=k(x)[\begin{pmatrix}
		t_1 \\
		t_2
	\end{pmatrix},\sigma,\begin{pmatrix} 
		\delta_1 \\
		\delta_2
	\end{pmatrix}]$ is simple.
	define the usual deglex order on the monomials in the variables $t_1,t_2$.   Assume that $I$ is a nonzero two-sided ideal of $S$ and let $f=f(t_1,t_2)\in I$ be nonzero polynomial with minimal deglex order amongst nonzero elements of $I$.   If $f\in k(X)$ we get that $f$ is invertible and hence $I=S$.  So let $w\ne 1$ be the deglex leading monomial in $f$.  An easy computation shows that the deglex order of $xw-wx$ is smaller than that of $w$.  Hence the the deglex order  of $fx-fx\in I$ is smaller than that of $f$.  This implies that $fx=xf$, and hence the same is true for the leading term of $f$.  This implies that $f\in k(x)$ a contradiction.

	\item Let us notice that in the previous example when the characteristic of $k$  is finite,  the ring $S$ will not be simple anymore.  For instance if $char(k)=2$, we have that the left ideal generated by 
$I=St_1^2 + St_2^2 +\sum_{w\in \Omega}St_1^2t_2w +\sum_{w\in \omega} 
t_2^2t_1w$ is a two sided ideal of $S$.  It is easy to check that $\delta_1^2=\delta_2^2=0$ and this implies that the elements $t_1^2$ and $t_2^2$ are in the kernel of the ring homomorphism (see Proposition (\ref{Pro for varphi and 1 1 correspondence})) $\varphi: S \rightarrow End(k(x),+)$ which is  associated to the MLT defined by the point $(0,0)$.	
\end{enumerate}
}
\end{examples}

	\section{Evaluation and $(\si,\ud)$-conjugation}
	
~~~The evaluation of skew polynomials in one variables were in particular studied in  \cite{LM}, \cite{L2012}, and \cite{BL2023}. In the setting of multivariate extensions over a division ring they were defined in Mart\'i{n}ez-Pe\~nas and Kschischang's paper  \cite{MP2019}.   Here we extend the definitions to the case on a general coefficient ring.	
	\begin{definitions}
		\label{Evaluation and conjugation}
		\begin{enumerate}
			\item We define the evaluation of $f(\ut)\in S=A[\ut; \si,\underline{\de}]$ at $(a_1,\dots,a_n)\in A^n$, via the representative of $f(\ut)+I\in S/I$ by an element of $A$, where $I$ is the left ideal $I=S(t_1-a_1)+S(t_2-a_2)+\cdots +S(t_n-a_n)$. 
			\item 	If $x\in U(A)$ we denote $\ua^x$ the $(\sigma,\ud)$-conjugate of $\ua$ (a column in $A^n$) by $x$ defined by
			\begin{equation}
			\ua^x=\sigma(x)\ua x^{-1}+\ud(x)x^{-1}
		\end{equation}
			\item For $\ua, \ub \in A^n$ we define 
			$\ua\sim \ub$ if there exists a nonzero 
			divisor $x\in A$ such that $\ub 
			x=\sigma(x)\ua +\ud(x)$.   We put
			\begin{equation} \label{a symetric to b}
			\Delta (\ua)=\{\ub \in A^n \mid \ua\sim \ub \}.
		\end{equation}
		\end{enumerate}
	\end{definitions}

%\begin{remark}
{\rm 
	It is important to remark that for a general ring $A$, the relation in  (cf. equation (\ref{a symetric to b}))  is not symmetric and hence doesn't lead to an equivalence relation. 
}
%\end{remark}

	\begin{examples} \label{examples for evaluation}
		{\rm 
			\begin{enumerate}
				\item If we suppose $n=2$, then evaluating $t_1t_2$ at $(a_1, a_2)$ we get 
				$(t_1t_2)(a_1,a_2)=\si_{11}(a_2)a_1+\si_{12}(a_2)a_2+\de_1(a_
				2)$.  Let us now compare with $t_2t_1$ evaluated at $(a_1,a_2)$.  We also have $(t_2t_1)(a_1,a_2)=\sigma_{22}(a_1)a_2+\sigma_{21}(a_1)a_1+\delta_2(a_1)$.  
				
				\item When $\sigma=(\sigma_1,\dots,\sigma_n)$ is diagonal we have, for $1\le i \le n$ and $a\in A$, $t_ia=\sigma_i(a)+ \delta_i(a)$ and hence, the skew polynomial rings $A[t_i,\sigma_i,\delta_i]$ are contained in $S$.  We compute $(t_1t_2)(a_1,a_2)=\sigma_1(a_2)a_1+\delta_1(a_2)$ and $(t_2t_1)(a_1,a_2)=\sigma_2(a_1)a_2+\delta_2(a_1)$.
				%		\item ****Evaluation when the base ring is finite.
				
			%	\item If n = 3 then evaluating $t_1t_2t_3$ at $(a_1,a_2, a_3)$ we get 
			%	$t_1t_2t_3(a_1,a_2, a_3) = $ 
			\end{enumerate}
		}	
	\end{examples}
	 Let us remark that the evaluations that we obtain in the above examples are very different from the evaluations that appear when considering iterated extensions (cf. \cite{BL2023}). 
	\vspace{3mm}
	
	Since $S/I$ is a left $S$-module,  it gives rise to a  $(\si, \underline{\de})$-PMT on $S/I$ given by the actions of $t_i$ for $1\le i \le n$.  The elements of $S/I$ are represented by a unique element 
	of $A$ so  that the action of $t_i$ on $S/I$  can be described by
	$$t_i.(x+I)=t_ix+I=\sum     \si_{ij}(x)a_j+\de_i(x) +I.$$ 
	
	The PMT attached to this action is 
	$T_{\ua}=(T_{a_1},T_{a_2},\dots, T_{a_n})$ where, for $x\in A$ and $1\le i \le n$, we have $T_{a_i}(x)=\sum_{j=1}^n\si_{ij}(x)a_j + \de_i(x)$ (cf. Examples  \ref{examples of PMT} equation number (\ref{T_{a_i}(b)})).
	
	  Let us recall from Proposition (\ref{Pro for varphi and 1 1 correspondence}) that the map $\varphi_{\ua}:S=A[\ut,\sigma,\ud] \rightarrow End(A,+)$ defined by $\varphi_{\ua}(f(t_1,\dots,t_n))=f(T_{a_1},\dots, T_{a_n})$ is a ring homomorphism.
	
	\vspace{3mm}

	The next proposition established a link between evaluation and PMT. This was given in Theorem 1 of \cite{MP2022} for the case when the base ring is a division ring.  The case of a general ring as a base ring was also proved for univariate extensions in the papers \cite{L 1995} and \cite{L2012}. 
	
	\begin{proposition}\label{Pro for the link between evaluation and PMT}
		For $f(\ut)\in S=A[\ut,\sigma,\ud]$ and $\ua\in A^n$ we have
		$$    
		f(\ua)=f(T_{\ua})(1).
		$$
	\end{proposition}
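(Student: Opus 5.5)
We identify the left $S$-module $S/I$, where $I=S(t_1-a_1)+\cdots+S(t_n-a_n)$, with $A$ as a left $A$-module. Under this identification the action of the variables is the PMT $T_{\ua}$, and the claim reduces to evaluating the action of $f$ on the class of $1$.

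First I would verify that every element of $S$ is congruent modulo $I$ to a unique element of $A$, so that $A\to S/I$, $x\mapsto x+I$, is a bijection of left $A$-modules. Existence follows by induction on the length of monomials. For a monomial $m=t_{i}m'$ we write $\alpha m=\alpha t_i m' $. By induction $m'\equiv x \pmod I$ for some $x\in A$, and since $I$ is a left ideal, $t_im'\equiv t_ix$. Then
$$t_ix=\sum_j\si_{ij}(x)t_j+\de_i(x)\equiv \sum_j\si_{ij}(x)a_j+\de_i(x),$$
because $t_j-a_j\in I$ and $\si_{ij}(x)(t_j-a_j)\in I$.

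Uniqueness means $I\cap A=0$. I expect this to be the main obstacle, since for a general ring $A$ it needs an argument rather than a degree count. I would first try to get it from the PMT. By Proposition \ref{Pro for varphi and 1 1 correspondence}, $T_{\ua}$ makes $A$ a left $S$-module, with $\varphi_{\ua}(g)=g(T_{\ua})$ a ring homomorphism. The map $\psi:S\to A$, $g\mapsto g(T_{\ua})(1)$, is a homomorphism of left $S$-modules, since $\psi(hg)=h(T_{\ua})(g(T_{\ua})(1))=h\cdot\psi(g)$. Also $\psi(t_j-a_j)=T_{a_j}(1)-a_j=\sum_k\si_{jk}(1)a_k+\de_j(1)-a_j=0$, because $\si(1)=I_n$ and $\de_j(1)=0$. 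Hence $\psi$ vanishes on $I$ and factors through $S/I\to A$. Since $\psi(x)=x$ for $x\in A$, any $x\in I\cap A$ satisfies $x=\psi(x)=0$, so $I\cap A=0$. The evaluation $f(\ua)$ is therefore well defined.

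With this in place the proposition is immediate. Write $f\equiv f(\ua)\pmod I$. Then $\psi(f)=\psi(f(\ua))=f(\ua)\cdot 1=f(\ua)$, where the middle equality uses that $\psi$ kills $I$ and that $\psi(c)=c$ for $c\in A$. On the other hand $\psi(f)=f(T_{\ua})(1)$ by definition, which gives $f(\ua)=f(T_{\ua})(1)$.

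As a consistency check, the induced $S$-action on $S/I\cong A$ is exactly $t_i\cdot(x+I)=T_{a_i}(x)+I$, matching the discussion preceding the statement. So $\psi$ is the canonical isomorphism $S/I\to A$, and the result says that the class of $f$ equals $f\cdot(1+I)$.
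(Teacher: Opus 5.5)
Your proof is correct, but it takes a different route from the paper's.

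The paper inducts on the length of a monomial. It writes $w=w't_i=w'(t_i-a_i)+w'a_i$ and discards $w'(t_i-a_i)\in I$. It then applies the induction hypothesis to $w'a_i$ and uses that $g\mapsto g(T_{\ua})$ is a ring homomorphism to get $w'(T_{\ua})(a_i)=w'(T_{\ua})(T_{a_i}(1))=w(T_{\ua})(1)$. Two things are left tacit there:
\begin{itemize}
\item It takes for granted that every class of $S/I$ has a unique representative in $A$; this is only asserted in the paragraph before the proposition.
\item It needs the hypothesis for the polynomial $w'a_i$, not just for a monomial. That requires left $A$-linearity of both sides.
\end{itemize}

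You instead build the left $S$-module map $\psi\colon S\to A$, $g\mapsto g(T_{\ua})(1)$, with $A$ carrying the $T_{\ua}$-action. You check that $\psi$ kills each $t_j-a_j$, hence all of $I$, and restricts to the identity on $A$. The identity $f(\ua)=f(T_{\ua})(1)$ then follows with no induction. The only induction you need is the reduction showing that every class has a representative in $A$.

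What your route buys:
\begin{itemize}
\item It proves $I\cap A=0$ for an arbitrary coefficient ring $A$, so the evaluation is actually well defined.
\item It gives the left $S$-module isomorphism between $S/I$ and $A$ with the $T_{\ua}$-action. The paper uses this in the surrounding discussion but never proves it.
\end{itemize}
The paper's route is a hands-on computation that mirrors the one-variable argument. Both rest on the same inputs: Proposition \ref{Pro for varphi and 1 1 correspondence}(1), together with $\sigma(1)=I_n$ and $\ud(1)=0$.
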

	\begin{proof}
		Since $f(\ut)$ is a sum of monomials, it is enough to prove this formula for a monomial.  Let 
		$w=t_{i_1}t_{i_2}\cdots t_{i_l}$ be such a 
		monomial.  We proceed by induction on the length of 
		$w$.  If this length is one, we have $w=t_{i_1}$ 
		for some $1\le i_1 \le n$.  Since $\sigma(1)= I_n$, 
		we have that $T_{a_i}(1)=a_{i_1}$.  Hence, 
		$t_{i_1}(\ua)=a_{i_1}=T_{a_i}(1)$.  
		
		Assume that the formula is true for monomials of length $l$, for some $l\ge 1$, and consider a monomial of length $l+1$: $w=w't_i$ where $w'$ is of length $l$.  We then have $w(\ua)=(w't_i)(\ua)=
		(w'(t_i-a_i)+w'a_i)(\ua)$ and since $w'(t_i-a_i)\in S(t_i-a_i)$ we have  $w(\ua)=(w'a_i)(\ua)$.  Using the induction hypothesis we obtain $w(\ua)= 
		(w'a_i)(T_{\ua})(1)=(\varphi_{\ua}(w'a_i))(1)=(\varphi_{\ua}(w')\circ\varphi_{\ua}(a_i))(1)=
		w'(T_{\ua})(a_i)=
		w'(T_{\ua})((T_{a_i})(1))=w'(T_{\ua})(t_i(T_{\ua})(1))=(w't_i)(T_{\ua})(1)=w(T_{\ua})(1)$. 
	\end{proof}		
	\vspace{3mm}
	   The fact that the map $\varphi$ in Proposition (\ref{Pro ring of endo}) is a ring 
	homomorphism,  then immediately leads to part (1) of the following proposition.  This classical formula is called the ``product formula".  In the case of one variable it can be found in \cite{L2012}.  Part (2) of the next proposition was extended to the multivariable setting over a division ring in  \cite{MP2019}.   Here we give it for a general coefficient ring.
	\begin{proposition}
		\label{Pro product formula}
		Suppose that $f,g \in S$, $\ua\in A^n$, and $x\in A$.
		\begin{enumerate}
			\item We have:
			$$(fg)(\ua)=f(T_{\ua})(g(\ua))$$   In particular,  if 
			$g(\ut)=x\in A$, then we have $(f\circ x)(\ua)=f(T_{\ua})(x)$.
			\item Assume that $0\ne  g(a) \in U(A) $, then we get:
			$$
			(fg)(\ua)= f(\ua^{g(\ua)})g(\ua).
			$$
		\end{enumerate}
		\begin{proof}
			(1) $fg(\ua)=fg(T_{\ua})(1)=
			(\varphi(fg))(1)=(\varphi(f)\circ\varphi(g))(1)=
			\varphi(f)(\varphi(g)(1))\\
			f(T_{\ua})(g(T_{\ua})(1))=f(T_{\ua})(g(\ua))$, where $\varphi$ is the map associated to the PMT $T_{\ua}$, as defined in Proposition (\ref{Pro for varphi and 1 1 correspondence}). 
			
			(2)   We put $x=g(\ua)$ and $I=\sum S(t_i-a_i^x)$.   We have that, $f-f(\ua^x)\in \sum S(t_i-a_i^x)$.  Since  $(\ut -\ua^x)x=\sigma(x)(\ut-\ua)$, we get that $fx-f(\ua^x)x\in \sum S(t_i-a_i^x)x\in \sum S(t_i-a_i)$.  This shows that  $(fx)(\ua)=f(\ua^x)x$ and hence $(fg)(\ua)=f(T_{\ua})(g(\ua))=f(T_{\ua})(x)=(fx)(\ua)=f(\ua^x)x$. 
		\end{proof}		
	\end{proposition} 
	
	The first equality in the previous proposition shows how the use of $T_{\ua}$ leads to a general product formula for polynomials with coefficients in a general base ring $A$.  This also gives a 
	link between the kernel of $f(T_{\ua})$ and the roots of $f(\ut)$.  In case $A=K$ is a division ring, and $f.x \neq 0 $, the fact that  $f(T_{\ua})(x)=(f.x)(\ua)=f(\ua^x)x$ shows that the kernel of $f(T_{\ua})$ corresponds to roots of $f(\ut)$.  The same is true for a domain but requires some formalism. 
	%*** the reader will find them in the next section.
	
	\begin{proposition}
		\label{Pro Roots and conjugation}
		Let 
		$\ua,\ub\in A^n$  be such that there exists a nonzero divisor $x\in A$ with $\ub x= \sigma(x)\ua + \ud(x)$.  Then
		\begin{enumerate}
			\item For any $y\in A$, $f(T_{\ub})(y)x=f(T_{\ua})(yx)$
			\item We have 
			$x\in \ker f(T_{\ua}) $ if and only if $f(\ub)=0$.  In particular, if $x\in U(A)$, $x\in ker(f(T_{\ua}))$if and only if $f(\ua^x)=0$.
		\end{enumerate}
	\end{proposition}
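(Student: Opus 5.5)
The plan is to prove (1) first at the level of the generators and then extend multiplicatively, and to deduce (2) from (1) combined with Proposition \ref{Pro for the link between evaluation and PMT}. Let $R_x:A\to A$ denote right multiplication by $x$. Statement (1) says exactly that $R_x\circ f(T_{\ub}) = f(T_{\ua})\circ R_x$ as additive maps of $A$.

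The key step is the case $f=t_i$, namely $T_{a_i}(yx)=T_{b_i}(y)x$ for all $y\in A$. I will expand the left side as $\sigma(yx)\ua+\ud(yx)$, using the compact form of $T_{\ua}$ from Examples \ref{examples of PMT}(c). By Proposition \ref{Pro for assoc. and hom}, $\sigma(yx)=\sigma(y)\sigma(x)$ and $\ud(yx)=\sigma(y)\ud(x)+\ud(y)x$. This gives
$$T_{\ua}(yx)=\sigma(y)\bigl(\sigma(x)\ua+\ud(x)\bigr)+\ud(y)x=\sigma(y)\ub x+\ud(y)x=T_{\ub}(y)x,$$
where the hypothesis $\ub x=\sigma(x)\ua+\ud(x)$ is used in the middle equality.

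Next I extend to arbitrary $f$. For a constant $c\in A$, the operator $\varphi(c)$ is left multiplication $L_c$, and $L_c$ commutes with $R_x$ by associativity. So the set of $f\in S$ with $R_x\varphi_{\ub}(f)=\varphi_{\ua}(f)R_x$ contains $A$ and every $t_i$. This set is closed under sums, and it is closed under products because $\varphi_{\ua}$ and $\varphi_{\ub}$ are ring homomorphisms by Proposition \ref{Pro for varphi and 1 1 correspondence}. Concretely, if $f$ and $g$ intertwine, then $R_x\varphi_{\ub}(f)\varphi_{\ub}(g)=\varphi_{\ua}(f)R_x\varphi_{\ub}(g)=\varphi_{\ua}(f)\varphi_{\ua}(g)R_x$. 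The set therefore contains all of $S$, which proves (1).

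For (2), I take $y=1$ in (1). Proposition \ref{Pro for the link between evaluation and PMT} then gives $f(\ub)x=f(T_{\ub})(1)x=f(T_{\ua})(x)$. So $x\in\ker f(T_{\ua})$ if and only if $f(\ub)x=0$. Since $x$ is a nonzero divisor, $f(\ub)x=0$ holds if and only if $f(\ub)=0$. For the special case $x\in U(A)$, the hypothesis is satisfied with $\ub=\sigma(x)\ua x^{-1}+\ud(x)x^{-1}=\ua^x$, so the claim follows at once.

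The only real computation is the generator identity above, and there the main care is applying the twisted Leibniz rule in the correct order $\sigma(y)\ud(x)+\ud(y)x$. The rest is formal.
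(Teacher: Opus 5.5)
Your proposal is correct and follows essentially the same route as the paper. You derive the same generator identity $T_{\ub}(y)x=T_{\ua}(yx)$ from the twisted Leibniz rule, then extend it to all of $S$. Your subring-closure argument via the homomorphisms $\varphi_{\ua},\varphi_{\ub}$ is just a repackaging of the paper's induction on word length, and it makes the handling of the left coefficients explicit where the paper only calls it obvious. Part (2) then follows, as in the paper, by taking $y=1$ and cancelling the nonzero divisor $x$.
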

	\begin{proof}
		(1)  Let us first compute 
		$T_{\ub}(y)x=\sigma(y)\ub x + 
		\ud(y)x=\sigma(y)\sigma(x)\ua 
		+\sigma(y)\ud(x)+\ud(y)x=\sigma(yx)\ua + \ud(yx)=T_{\ua}(yx)$.
		We use an induction on the length $l$ of a word $w=w(t_1,\dots , t_n)$ to prove the formula for monomials.  If $l=1$, $w(t_1,\dots,t_n)=t_i$ for some $1\le i \le n$ and the desired equality is just the $i^{th}$ row of the formula $T_{\ub}(y)x=T_{\ua}(yx)$, that we just proved.  Now assume the formula has been proved for a word $w=w(t_1,\dots,t_n)$ and let us show it holds for $w(t_1,\dots,t_n)t_i$ where $1\le i \le n$.  We have $(w(t_1,\dots,t_n)t_i)(T_{\ub})(y)x=
		w(T_{b_1},\dots ,T_{b_n})(T_{b_i}(y))x$.   Thanks to the induction hypothesis we obtain that 
		$w(T_{b_1},\dots ,T_{b_n})(T_{b_i}(y))x=w(T_{a_1},\dots,T_{a_n})((T_{b_i})(y)x)$.  Using the formula obtained for $l=1$,  leads to $w(T_{a_1},\dots,T_{a_n})((T_{b_i})(y)x)=
		w(T_{a_1},\dots,T_{a_n})(T_{a_i})(yx)$ and we conclude that $(w(t_1,\dots,t_n)t_i)(T_{\ub})(y)x=w(T_{a_1},\dots,T_{a_n})(T_{a_i})(yx)$, as desired. 
		The fact that the formula is true for a polynomial is now obvious.
		
		(2) Considering the equation in (1) with $y=1$, we get $f(\ub)x=f(T_{\ub}(1))x=f(T_{\ua})(x)$ and the fact that $x$ is not a zero divisor immediately gives that $x\in \ker f(T_{\ua})$ if and only if $f(\ub)=0$.  The last assertion is clear.
	\end{proof}
	
	We first give a consequence of Proposition (\ref{Pro product formula}) on the roots of a semi-invariant polynomial.   We write $ V(f) = \{\ua \in K^n ; f(\ua) = 0\} \subset K^n$ for the set of roots of $f\in S$.
	
	\begin{theorem} \label{Th Dring be semiinvar}
		Let $p(\ut)\in S=K[\ut; \sigma, \ud]$, where $K$ is a division ring be a semi-invariant polynomial.  Then for any $\ub\in V(p)$  we have that $\Delta^{\sigma,\delta}(\ub)\subset V(p)$.
	\end{theorem}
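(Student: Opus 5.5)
We show that every $\underline{c}\in\Delta(\ub)$ is a root of $p$ by combining Proposition \ref{Pro Roots and conjugation} with the semi-invariance of $p$. Let $\underline{c}\in\Delta(\ub)$. Since $K$ is a division ring, the element $x$ with $\underline{c}\,x=\sigma(x)\ub+\ud(x)$ is nonzero, hence invertible, and $\underline{c}=\ub^{x}$. By Proposition \ref{Pro Roots and conjugation}(2), applied with $\ua:=\ub$ and $\ub:=\underline{c}$, it suffices to show that $x\in\ker p(T_{\ub})$.

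For this we use the product formula of Proposition \ref{Pro product formula}(1) with $g=x\in K$:
$$p(T_{\ub})(x)=(p\,x)(\ub).$$
By Lemma \ref{semi-invariant poly and its map} there is a ring homomorphism $\varphi:K\to K$ with $p\,x=\varphi(x)\,p$. Hence
$$(p\,x)(\ub)=(\varphi(x)\,p)(\ub).$$

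Next we check that evaluation is left $K$-linear. This follows from the definition, since $I=\sum S(t_i-b_i)$ is a left ideal. If $p-p(\ub)\in I$, then
$$\varphi(x)p-\varphi(x)p(\ub)\in I,$$
so $(\varphi(x)p)(\ub)=\varphi(x)\,p(\ub)$. We need the representative in $K$ to be unique. This holds because $S/I\cong K$ as left $K$-modules: the left ideal $I$ meets $K$ trivially, since $S/I$ carries the PMT $T_{\ub}$ on $K$, which is already implicit in the paper's definition of evaluation. With $p(\ub)=0$ this gives
$$p(T_{\ub})(x)=\varphi(x)\cdot 0=0.$$
So $x\in\ker p(T_{\ub})$, and therefore $p(\ub^x)=p(\underline{c})=0$.

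Alternatively, once $p(T_{\ub})(x)=(p\,x)(\ub)$ is known, Proposition \ref{Pro product formula}(2) with $g=x$ (invertible) gives $(p\,x)(\ub)=p(\ub^x)\,x$. Combined with $(p\,x)(\ub)=0$ and $x\neq 0$, this again yields $p(\ub^x)=0$.

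The only delicate point is the identity $(\varphi(x)p)(\ub)=\varphi(x)p(\ub)$. It relies on the uniqueness of the evaluation representative, which holds because $S/I\cong K$ via $T_{\ub}$. The rest is a direct application of the earlier propositions.
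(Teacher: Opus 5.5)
Your proof is correct and is essentially the paper's argument. Semi-invariance gives $(p\,x)(\ub)=\varphi(x)\,p(\ub)=0$, and the product formula turns this into $p(\ub^x)x=0$; your alternative ending is exactly the paper's use of Proposition \ref{Pro product formula}(2), while your main route through $\ker p(T_{\ub})$ and Proposition \ref{Pro Roots and conjugation} only repackages the same identity $p(T_{\ub})(x)=p(\ub^x)\,x$. Your justification that evaluation is left $K$-linear (because $I\cap K=0$, which follows from the $S$-module structure on $K$ given by $T_{\ub}$) makes explicit a step the paper uses without comment.
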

	\begin{proof}
		By hypothesis, for every 
		$a\in K\setminus\{0\}$, we have that $p(\ut) a=\varphi(a)p(\ut)$.  Hence we have 
		$(p(\ut)a)({\underline b})=p(\ut)({\underline b}^a)a=
		(\varphi(a)p(\ut))(\ub)= \varphi(a)p(\ub)=0$, by the product formula.  This shows that for any $a\in K\setminus\{0\}$, we have that
		$p(\ut)({\underline b}^a)=0$, as required. 
	\end{proof}
	
	%\begin{remarks}
	%{\rm 
		%		(1) Since the map $\varphi$ associated with a  PMT is a ring homomorphism from $S$ to $\mathrm{End}(V,+)$, when $\varphi$ is not injective, the multivariate polynomial ring is not simple.   The simplicity is thus related to some algebraicity of a PMT, exactly as in the case of a single variable.  We will not go deeper into this subject.
		%%		
		%		MR4394033 (sent May 2022)*** Kim, Nam Kyun et al., Annihilating
		%		properties of ideals generated by coefficients of polynomials
		%		and power series. Internat. J. Algebra Comput. 32 (2022), no.
		%		2
		
		%		2) As a final remark let us mention that, if the division ring $K$ is finite-dimensional over its center $F$ and $\sigma$ is $F$-linear, the Skolem Noether theorem shows that $\sigma$ is diagonalizable.  In other words, there exist an invertible matrix $U$ and a set of $n$ automorphisms of $K$, say $\sigma_1, \dots, \sigma_n$, such that $\sigma = Inn_U \circ diag(\sigma_1,\dots,\sigma_n) $.  In this situation, the multivariate extension $S=K[\ut; \si,\underline{\de}]$ contains the ore extensions $S_i=K[t_i;\sigma_i,\delta_i]$.  In the iterated Ore extension this last fact is always true.
		%}
	%\end{remarks}
	
	\begin{proposition} \label{Pro semi invarent and right multiplication}
				Suppose that $\sigma=diag(\sigma_1,\dots, \sigma_n)$ and that, for some $1\le i \le n$ there is no polynomial $q(t_i)\in S_i=K[t_i,\sigma_i,\delta_i]$ such that $q(a)=0$ for every $a\in K$.  Then $p_i(t_i)\in S_i$ is semi-invariant if and only if, for every $a\in K$,  
		$$
		p_i(T_a)=r_{p_i(T_a)(1)}\circ \sigma_i^{n_i}
		$$
		where $n_i=deg (p_i(t_i))$ and $r_{p_i(T_a)(1)}$ stands for the right multiplication by $p_i(T_a)(1)$.
	\end{proposition}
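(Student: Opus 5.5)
The plan is to work entirely inside the one–variable Ore extension $S_i=K[t_i;\sigma_i,\delta_i]$, which sits in $S$ because $\sigma$ is diagonal (Examples \ref{examples CV and others}, item 4). Here $T_a$ denotes the one–variable PMT on $K$ attached to $a\in K$, namely $T_a(x)=\sigma_i(x)a+\delta_i(x)$. This is the case $n=1$ of Examples \ref{examples of PMT}(c). The two tools are:
\begin{itemize}
\item the product formula of Proposition \ref{Pro product formula}(1), specialised to $n=1$: $(p_i x)(a)=p_i(T_a)(x)$ for all $x,a\in K$;
\item Proposition \ref{Pro for the link between evaluation and PMT}, which gives $p_i(a)=p_i(T_a)(1)$.
\end{itemize}
I will assume, as the statement implicitly requires, that $p_i$ is monic of degree $n_i$. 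If not, one first divides by the leading coefficient. Then $\sigma_i^{n_i}(x)$ is forced to be the leading coefficient of $p_i x$, since $t_i^{n_i}x=\sigma_i^{n_i}(x)t_i^{n_i}+(\text{lower terms})$.

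\emph{Direct implication.} Suppose $p_i$ is semi-invariant. By Lemma \ref{semi-invariant poly and its map} there is a map $\varphi$ with $p_i x=\varphi(x)p_i$ for every $x\in K$. Comparing leading coefficients, using monicity and the formula for $t_i^{n_i}x$ above, gives $\varphi(x)=\sigma_i^{n_i}(x)$. Now fix $a\in K$ and apply the product formula:
$$p_i(T_a)(x)=(p_ix)(a)=(\sigma_i^{n_i}(x)p_i)(a)=\sigma_i^{n_i}(x)\,p_i(a)=\sigma_i^{n_i}(x)\,p_i(T_a)(1).$$
Here evaluation is left $K$-linear, because $I=S_i(t_i-a)$ is a left ideal. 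The last expression is exactly $\bigl(r_{p_i(T_a)(1)}\circ\sigma_i^{n_i}\bigr)(x)$.

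\emph{Converse.} Assume $p_i(T_a)=r_{p_i(T_a)(1)}\circ\sigma_i^{n_i}$ for every $a\in K$. Fix $x\in K$ and set
$$q(t_i)=p_ix-\sigma_i^{n_i}(x)p_i\in S_i .$$
Reversing the chain above, for every $a\in K$ we get
$$q(a)=p_i(T_a)(x)-\sigma_i^{n_i}(x)\,p_i(T_a)(1)=0 .$$
By the hypothesis that no polynomial of $S_i$ vanishes at every $a\in K$ (read as: no nonzero polynomial does), it follows that $q=0$. Hence $p_ix=\sigma_i^{n_i}(x)p_i$ for all $x$, so $p_i$ is semi-invariant. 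Note that $q$ has degree $<n_i$, since the leading terms cancel; this is not needed here, but it would be the relevant point if the hypothesis were only assumed in bounded degree.

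\emph{Main obstacle.} The main difficulty is bookkeeping rather than substance. First, one must check that the product formula and Proposition \ref{Pro for the link between evaluation and PMT} may be applied inside $S_i$ with the univariate $T_a$. This follows from the same proofs with $n=1$, or equivalently by viewing $S_i$ as its own Ore extension. Second, one must identify the semi-invariance map with $\sigma_i^{n_i}$, which is exactly where monicity is used.
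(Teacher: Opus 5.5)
Your proof is correct and is essentially the paper's argument: the product formula $(p_ix)(a)=p_i(T_a)(x)$ together with $p_ix=\sigma_i^{n_i}(x)p_i$ gives the forward direction, and for the converse the hypothesis forces $p_ix-\sigma_i^{n_i}(x)p_i=0$. You also usefully make explicit two things the paper uses silently: monicity, and the reading ``no \emph{nonzero} polynomial''.

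One correction concerns your side remark about dividing by the leading coefficient $c$. When $c$ is non-central this is not a harmless normalization. The semi-invariance map becomes $c\,\sigma_i^{n_i}(\cdot)\,c^{-1}$, and the forward implication can fail. For example, take $p=it$ over the real quaternions with $\sigma_i=\mathrm{id}$ and $\delta_i=0$, and evaluate at $x=j$, $a=1$. So monicity is a genuine hypothesis of the statement, not a reduction.
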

	\begin{proof}
		Using Proposition(\ref{Pro product formula}), we have, for any $x\in K$, $p_i(T_a)(x)= 
		(p_i(t_i)x)(a)=(\sigma_i^{n_i}(x)p_i(t_i))(a)
		=\sigma_i^{n_i}(x)(p_i(T_a)(1))=r_{p_i(T_a)(1)}\circ \sigma_i^{n_i}(x)$.  This gives the formula.
		
		Conversely, if $	p_i(T_a)=r_{p_i(T_a)(1)}\circ \sigma_i^{n_i}$ then, for every $x\in K$, $p_i(T_a)(x)=\sigma_i^{n_i}(x)p_i(T_a)(1)$.
		Thus for every $x,a\in K$, $(p_i(t_i)x)(a)= (\sigma_i^{n_i}(x)p_i(t_i))(a)$.  Hence our hypothesis shows that, for any $x\in K$, $p_i(t_i)x=\sigma_i^{n_i}(x)p_i(t_i)$, showing that $p_i(t_i)$ is semi-invariant.
	\end{proof}
	
	%\vspace{5mm}
	
	\section{Centralizers and roots}
	
	~~~In this section we study the important role played by the centralizer, Also, we will assume $A$ is a (noncommutative) domain and $S$ will stand for $S=A[\ut;\sigma,\ud]$.
	
	%\vspace{3mm}
	
	\begin{definitions}
		Let $\ua=(a_1,\dots,a_n)^t\in A^n$, the $(\sigma,\ud)$-centralizer of $\ua$, denoted $C^{(\sigma,\ud)}(\ua)$ is the set
		\begin{equation} \label{centralizer}
			C^{(\sigma,\ud)}(\ua)=\{x\in A \mid \sigma(x)\ua + \ud(x)=\ua x \} \subset A
		\end{equation}

		The idealizer, denoted $\rm{idl}(I)$ of a left ideal $I$ of $S=A[\ut,\sigma, \ud]$ is defined by 
		$\rm{idl}(I)=\{p\in S\mid Ip\subset I\}$.
	\end{definitions}
	
	One can easily check that $C^{(\sigma,\ud)}(\ua)$ and $\rm{idl}(I)$ are in fact  subrings of $A$ and $S$ respectively.  Moreover $I\subseteq \rm{idl}(I)$. If we assume that $n=1$, one 
	can readily check that $T_{\ua}$ is a right linear map over the subring 
	given by $C^{(\si, \underline{\de)}}(\ua):=\{x\in A \mid T_{\ua}(x)=\ua x\}$. 
	In the case when $A=K$ is a division ring, $C^{(\si, \underline{\de})}(a)$ 
	is a division ring isomorphic to $\mathrm{End}_S(S/I)$, where $I=\sum_i S(t_i-a_i)$.  
	
	\begin{proposition} \label{Pro for centralizer}
		
		\begin{enumerate}
			\item[(1)] $b\in C^{(\sigma,\delta)}(\ua)$ if and only if for any $1\le i \le n$, we have $\sum_{j=1}^n\sigma_{ij}(b)a_j+\delta_i(b)-a_ib=0$.
			\item[(2)] If $I=\sum S(t_i-a_i)$, then $S/I$ is a left $S$ module and a right $C^{(\sigma,\delta)}(\ua)$- module.
			\item[(3)] There is a ring isomorphism between $C^{(\sigma,\delta)}(\ua)$ and $End_S(S/I)$, where $S=A[\ut;\sigma,\ud]$ and $I=\sum_{i=1}^nS(t_i-a_i)$.  If the base ring $A$ is a division ring these rings are in fact division rings.
			\item[(4)] We have isomorphisms of rings 
			$$
			C^{(\sigma,\delta)}(\ua)\cong End_S(S/I) \cong 
			\rm{idl}(I)/I.
			$$
 		\end{enumerate}
		
	\end{proposition}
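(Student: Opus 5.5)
Part (1) is just the definition written row by row: the column identity $\sigma(b)\ua+\ud(b)=\ua b$ has as its $i$-th row $\sum_{j}\sigma_{ij}(b)a_j+\delta_i(b)=a_ib$. Equivalently, $b\in C^{(\sigma,\delta)}(\ua)$ if and only if $T_{\ua}(b)=\ua b$. I will use this reformulation throughout.

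For (2), the module $S/I$ is identified with $A$ through the evaluation of Definitions \ref{Evaluation and conjugation}, and the left $S$-action is given by the PMT $T_{\ua}$. For $x\in C^{(\sigma,\delta)}(\ua)$ I will check that $(\ut-\ua)x\in I$:
$$(\ut-\ua)x=\sigma(x)\ut+\ud(x)-\ua x=\sigma(x)(\ut-\ua)+\big(\sigma(x)\ua+\ud(x)-\ua x\big)=\sigma(x)(\ut-\ua).$$
The right-hand side lies in $I$ row by row. Hence $Ix\subseteq I$, and right multiplication by $x$ is well defined on $S/I$, giving a right $C^{(\sigma,\delta)}(\ua)$-module structure that commutes with the left $S$-action.

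For (3), I define $\Phi:C^{(\sigma,\delta)}(\ua)\to End_S(S/I)$ by $\Phi(x)(f+I)=fx+I$. This is $S$-linear by (2), and I will write endomorphisms on the right so that $\Phi$ is multiplicative rather than antimultiplicative.

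Injectivity: $\Phi(x)=0$ gives $x\in I$. Since every element of $S/I$ has a unique representative in $A$ (an element of $A$ lies in $I$ only if it is $0$), this forces $x=0$.

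Surjectivity: given $\psi\in End_S(S/I)$, let $x\in A$ be the representative of $\psi(1+I)$. Applying $\psi$ to $(t_i-a_i)+I=0$ gives $(t_i-a_i)x\in I$. Evaluating at $\ua$ with Proposition \ref{Pro product formula}(1), this says $T_{a_i}(x)-a_ix=0$ for every $i$. By (1), this means $x\in C^{(\sigma,\delta)}(\ua)$, and $\psi=\Phi(x)$ because both agree on the generator $1+I$.

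When $A=K$ is a division ring, $S/I\cong K$ as an $A$-module, and for $0\neq x$ in the centralizer I expect its inverse to lie there as well. Applying $\sigma(x^{-1})$ on the left and $x^{-1}$ on the right to $\sigma(x)\ua+\ud(x)=\ua x$, and using $\ud(1)=0=\sigma(x^{-1})\ud(x)+\ud(x^{-1})x$, yields $\sigma(x^{-1})\ua+\ud(x^{-1})=\ua x^{-1}$. So the centralizer is a division ring, and hence so is $End_S(S/I)$.

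For (4), I will use the standard isomorphism $End_S(S/I)\cong \mathrm{idl}(I)/I$ for a cyclic module: an element $p\in\mathrm{idl}(I)$ induces $f+I\mapsto fp+I$, every endomorphism arises this way by taking $p$ to be a preimage of the image of $1+I$, and the kernel is exactly $I$. The condition $Ip\subseteq I$ is precisely what makes the map well defined. Combining this with (3) gives the chain of isomorphisms.

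The main obstacle I anticipate is the bookkeeping behind the surjectivity in (3). It relies on the fact that $S/I$ is represented uniquely by elements of $A$, which the paper takes as part of the evaluation definition; for a general ring $A$, I would justify it via the PMT $T_{\ua}$ defining the module $A$ through Proposition \ref{Pro for varphi and 1 1 correspondence}. It also requires care with left versus right conventions for composition, so that the isomorphisms are ring maps rather than anti-isomorphisms.
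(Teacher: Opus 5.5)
Your proposal is correct and follows the paper's route: right multiplication by $b\in C^{(\sigma,\delta)}(\ua)$ is well defined on $S/I$ because $(\ut-\ua)b=\sigma(b)(\ut-\ua)$, and this gives $C^{(\sigma,\delta)}(\ua)\cong End_S(S/I)\cong \mathrm{idl}(I)/I$. You also supply details the paper leaves as ``easy to check'': injectivity and surjectivity via $A\cap I=0$ (justified through the PMT $T_{\ua}$), closure of the centralizer under inverses, and writing endomorphisms on the right so that the maps are isomorphisms rather than anti-isomorphisms.
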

	\begin{proof}
		(1) This is a direct consequence of the definition (cf. equation (\ref{centralizer})).
		
		(2) The fact that $S/I$ is a right $C^{(\sigma,\delta)}(\ua)$-module is clear since 
		for any $1\le i \le n$ and any $b\in C^{(\sigma,\delta)}(\ua)$, we have $(t_i-a_i)b=t_ib-a_ib=\sum_{j=1}^n\sigma_{ij}(b)t_j+\delta_i(b)-a_ib=\sum_{j=1}^n\sigma_{ij}(b)(t_j-a_j)+\sum_{j=1}^n\sigma_{ij}(b)a_j+\delta_i(b)-a_ib$.  Hence, by (1) we get $(t_i-a_i)b = \sum_{j=1}^n\sigma_{ij}(b)a_j\in I$.
		
		(3) For $b\in C^{(\sigma,\delta)}(\ua)$,  we define a map $\psi(b):S/I \rightarrow S/I$ by $\psi(b)(f(t)+I)=f(t)b+I$.  This map is well defined since, for any $1\le i \le n$ and any $s\in S$, we have (in $S/I$) $\psi(b)(s(t_i-a_i))=s(t_i-a_i)b=
		%	s(t_ib-a_ib)=s(\sum_{j=1}^n\sigma_{ij}(b)t_j+\delta_i(b)-a_ib)=s(\sum_{j=1}^n\sigma_{ij}(b)(t_j-a_j)+\sum_{j=1}^n\sigma_{ij}(b)a_j+\delta_i(b)-a_ib)=
		s(\sum_{j=1}^n\sigma_{ij}(b)(t_j-a_j)$, where the last equality is obtained as in (2) above.  The map $\psi(b)$ is easily seen to be left $S$-linear. The fact that $\psi$ is an isomorphism of rings is easy to check.  In case $A$ is a division ring, one can check that if $b\in C^{(\sigma,\delta)}(\ua)$ then $b^{-1}\in C^{(\sigma,\delta)}(\ua)$. 
		
		(4) The first isomorphism is given in (3) and the second is easy and well-known. 
	\end{proof}
	
	\begin{remark}
		{\rm 
			There is a more general point of view:  Having a left $S$ module V.   We put $C=End_S(V)$.   We then obtain a ($S,C$) bimodule structure on $V$.  
			%If such that $V$ is free and finite dimension say $n$  over $A$, $End_S(V)$ is related to some $n\times n$ matrices over $A$ acting on the right....??
			If we fix $\ua \in A^n$, and consider $V=S/I$ where $I=\sum S(t_i-a_i)$, we get a $(S,C(\ua))$ bimodule structure on $S/I$. This shed some light on the fact that $T_{\ua}$ is a $C(\ua)$ is a right module map.
		}
	\end{remark}

	\begin{proposition} \label{Pro T_a is right C linear}
		Let $\ua \in A^n$ then for any $1\le i \le n$, we have 
		\begin{center}
			$T_{a_i}\in End(A_{C})$, where $C=C^{(\sigma,\delta)}(\ua)$.
		\end{center}
	\end{proposition}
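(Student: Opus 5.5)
The statement says that each $T_{a_i}$ is additive and satisfies $T_{a_i}(yc)=T_{a_i}(y)c$ for all $y\in A$ and $c\in C=C^{(\sigma,\ud)}(\ua)$, where $A$ is viewed as a right $C$-module. Additivity is immediate, since $T_{a_i}(y)=\sum_j\sigma_{ij}(y)a_j+\delta_i(y)$ and both $\sigma_{ij}$ and $\delta_i$ are additive. So only the right $C$-linearity needs proof. I will work with the column $T_{\ua}(y)=\sigma(y)\ua+\ud(y)$ and prove the vector identity
$$T_{\ua}(yc)=T_{\ua}(y)\,c,$$
whose $i$-th row is the claim for $T_{a_i}$.

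\textbf{Direct computation.} Use that $\sigma$ is a ring homomorphism into $M_n(A)$ and the $\sigma$-derivation rule of Proposition \ref{Pro for assoc. and hom}, $\ud(yc)=\sigma(y)\ud(c)+\ud(y)c$. Then
$$T_{\ua}(yc)=\sigma(y)\sigma(c)\ua+\sigma(y)\ud(c)+\ud(y)c=\sigma(y)\bigl(\sigma(c)\ua+\ud(c)\bigr)+\ud(y)c.$$
By the definition of the centralizer (equation (\ref{centralizer})), $\sigma(c)\ua+\ud(c)=\ua c$. Substituting gives
$$T_{\ua}(yc)=\sigma(y)\ua c+\ud(y)c=\bigl(\sigma(y)\ua+\ud(y)\bigr)c=T_{\ua}(y)c.$$
This is exactly the computation in part (1) of Proposition \ref{Pro Roots and conjugation}, applied with $\ub=\ua$ and $x=c$. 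The non-zero-divisor hypothesis there was only needed for part (2), so it plays no role here.

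\textbf{Conceptual check.} As in the preceding Remark, one can also view this through the isomorphism $C\cong End_S(S/I)$ of Proposition \ref{Pro for centralizer}, where $I=\sum_i S(t_i-a_i)$. Identify $S/I$ with $A$, so that $t_i$ acts as $T_{a_i}$ and $c$ acts on the right as $\psi(c)$, that is, by $f+I\mapsto fc+I$. Since $A\subset S$ and $y+I\mapsto yc+I$, the map $\psi(c)$ is right multiplication by $c$ on $A$. Then $S$-linearity of $\psi(c)$ gives $T_{a_i}(y)c=\psi(c)(t_iy+I)=t_i(yc)+I=T_{a_i}(yc)$.

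I expect no real obstacle. The only care needed is keeping the matrix/column bookkeeping straight: $\sigma(y)\ua c$ must be read as the column $\sigma(y)\ua$ multiplied on the right by the scalar $c$, which is legitimate because the matrix products are associative.
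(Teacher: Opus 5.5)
Your proof is correct and follows the same route as the paper. Like the paper, you expand $T_{\ua}(yc)$ using that $\sigma$ is a ring homomorphism and the $\sigma$-derivation rule $\ud(yc)=\sigma(y)\ud(c)+\ud(y)c$, then substitute the centralizer condition $\sigma(c)\ua+\ud(c)=\ua c$. Your column-vector formulation is cleaner than the paper's componentwise computation, which slips on the indices (it writes $\sigma_{ij}(xy)=\sigma_{ij}(x)\sigma_{ij}(y)$ instead of $\sum_k\sigma_{ik}(x)\sigma_{kj}(y)$), and your conceptual check via $\psi(c)$ on $S/I\cong A$ is also valid.
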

	\begin{proof}
		We have, for $x\in A$ and $y\in C $, $ T_{a_{i}}(xy) = \sum_j (\sigma_{ij}(xy) a_{j} + \delta_i(xy)) =  \sum_j (\sigma_{ij}(x) \sigma_{ij}(y) a_{j} + \sigma_{ij}(x)\delta_j(y) + \delta_i(x)y) = 
		\sum_j (\sigma_{ij}(x)( \sigma_{ij}(y) a_{j} + \delta_j(y)) + \delta_i(x)y) = \sum_j (\sigma_{ij}(x) a_iy + \delta_i(x)y)  =  \sum_j T_{a_i} (x) y.  $
	\end{proof}

	For a domain $A$, $f\in 
	S=A[\ut;\sigma,\ud]$, and $\ua \in A^n$, we define 
	$$V(f)=\{\ua \in A^n \mid f(\ua)=0 \} \; {\rm and} 
	$$
	The next proposition will put some structure on the set of roots of a polynomial $f\in S=A[\ut;\sigma,\ud]$.  The set $V(f)$ is naturally divided into conjugacy classes. For any $\ua \in V(f)$, we consider the set
	$$
	A_{\ua}:=\{x\in A \mid \exists \ub \in A^n \; {\rm with}\;  \ub x=\sigma(x )\ua + \ud(x) \} $$
	
	Since $A$ is a domain we notice that if $x\in A_{\ua}$  there exists a {\it unique} $\ub\in A^n$ such that
	$ \ub x=\sigma(x )\ua + \ud(x)$.   We will denote this unique $\ub$ by $\ua^x$.  We put
	$$E(f,\ua):=\{x \in A_{\ua} \mid 
%	\exists \ub \in A^n \;{\rm with}\; \ub x=\sigma(x )\ua + \ud(x) \;{\rm and}\; 
	f(\ua^x)=0\}
	$$
We recall that $
\Delta (\ua)=\{\ub \in A^n \mid \ua\sim \ub \}=\{\ua^x \mid x\in A\}.
$   The next proposition extends results obtained for one variable in \cite{L2012} to the multivariable setting.  Some of these have been recently obtained in the case when the coefficients are from a division ring (cf. Theorem 5 in \cite{MP2022}).

	\begin{proposition} \label{Pro for domain}
		Let $A$ be a domain, $\ua \in A^n$, and $f(\ut)\in S=A[\ut;\sigma,\ud]$. Then
		\begin{enumerate}
			\item If $0\ne x\in A$ is such that  $\ub x=\sigma(x)\ua + \ud(x)$ then 
			$x\in \ker f(T_{\ua}) $ if and only if $f(\ub)=0$
			\item $E(f,\ua)=\ker f(T_{\ua}) \cap A_{\ua}$
			\item $\ker f(T_{\ua})$ and $E(f,\ua)$ are right $C^{(\sigma,\delta)}(\ua)$ modules.
%			\item The roots of all polynomials $f(\ut)x$, $x \in A$, is a
%			union of right modules over different centralizers.  In case when $A$ is a division rings, they are contained in the set of roots of $f(\ut)$.
			\item $\Delta(\ua)\cap V(f)=\{\ua^x \mid x\in E(f,\ua)\}=\ua^{E(f,\ua)}$.
			\item Let $\Gamma =\{\ua \in A^n \mid V(f)\cap \Delta(\ua)\ne \emptyset \}$.  Then $V(f)=\bigcup_{\ua \in \Gamma} (\ua^{E(f,\ua)})$.       
		\end{enumerate}
	\end{proposition}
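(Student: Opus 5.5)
The plan is to reduce everything to Proposition \ref{Pro Roots and conjugation} and Proposition \ref{Pro T_a is right C linear}, using that in a domain every nonzero element is a nonzero divisor. Part (1) is exactly Proposition \ref{Pro Roots and conjugation}(2) applied with $x\neq 0$. Part (2) follows directly: for $x\in A_{\ua}$ the unique $\ub=\ua^x$ satisfies $\ub x=\sigma(x)\ua+\ud(x)$. If $x\neq0$, part (1) gives $x\in\ker f(T_{\ua})$ iff $f(\ua^x)=0$, i.e. iff $x\in E(f,\ua)$. The case $x=0$ is degenerate and needs a separate check: $0\in\ker f(T_{\ua})$ always, since $f(T_{\ua})$ is additive, while $\ua^0$ is not unique. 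I would adopt the convention that $x$ ranges over nonzero elements in $A_{\ua}$, or else treat $0$ as trivially in both sets. I will flag this as the one point where the statement needs care.

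For part (3), I would note that $f(T_{\ua})$ is a sum of composites of the maps $T_{a_i}$ and left multiplications $L_\alpha$ by coefficients. Each $T_{a_i}$ is right $C$-linear by Proposition \ref{Pro T_a is right C linear}, with $C=C^{(\sigma,\delta)}(\ua)$, and each $L_\alpha$ is right linear over any subring. Hence $f(T_{\ua})\in \mathrm{End}(A_C)$ and its kernel is a right $C$-submodule of $A$. For $E(f,\ua)$ I would show that $A_{\ua}$ is closed under right multiplication by $y\in C$. If $\ub x=\sigma(x)\ua+\ud(x)$, then
$$\sigma(xy)\ua+\ud(xy)=\sigma(x)(\sigma(y)\ua+\ud(y))+\ud(x)y=\sigma(x)\ua y+\ud(x)y=\ub xy.$$
So $xy\in A_{\ua}$, and moreover $\ua^{xy}=\ua^x$ when $xy\ne0$. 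Closure under addition is the main obstacle, because $A_{\ua}$ need not be additively closed in general. Here I would instead use part (2), writing $E(f,\ua)=\ker f(T_{\ua})\cap A_{\ua}$, and argue directly that $x_1,x_2\in E(f,\ua)$ gives $x_1+x_2\in E(f,\ua)$. If this fails in general, I would weaken the claim to closure under the right $C$-action, which is what the computation above proves.

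Part (4) follows from the definitions. We have $\ub\in\Delta(\ua)\cap V(f)$ iff $\ub=\ua^x$ for some nonzero $x\in A_{\ua}$ with $f(\ua^x)=0$, i.e. $x\in E(f,\ua)$. For part (5), the inclusion $\supseteq$ follows from (4). For $\subseteq$, take $\ub\in V(f)$; reflexivity via $x=1$ gives $\ub=\ub^1\in\Delta(\ub)\cap V(f)$, so $\ub\in\Gamma$ and $\ub\in \ub^{E(f,\ub)}$.
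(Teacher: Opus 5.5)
Your handling of (1), (2), (4), (5) and of $\ker f(T_{\ua})$ in (3) is the paper's own argument. You reduce (1) to Proposition~\ref{Pro Roots and conjugation}, read (2) off from it, get the kernel from Proposition~\ref{Pro T_a is right C linear}, and use the identity $\sigma(xc)\ua+\ud(xc)=\ub xc$ for $c\in C^{(\sigma,\delta)}(\ua)$. Your caveat about $x=0$ is justified. Indeed $0\in A_{\ua}$ always, and every $\ub$ satisfies $\ub\cdot 0=\sigma(0)\ua+\ud(0)$. So $\ua^0$ is undefined, and $0$ must be excluded from (or adjoined by convention to) $E(f,\ua)$ for (2) and (4) to make sense. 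The paper's uniqueness claim silently skips this.

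The step you leave open, proving $x_1+x_2\in E(f,\ua)$ directly, cannot be carried out, because it is false. Take the following data:
\begin{itemize}
\item $A=k[s]$ with $k$ a field and $q\in k^{*}$ not a root of unity;
\item $n=1$, $\sigma(p(s))=p(qs)$, $\delta=0$, and $a=1$.
\end{itemize}
Then $T_1=\sigma$ and $C^{(\sigma,\delta)}(1)=\{x\mid x(qs)=x(s)\}=k$. If $x\neq 0$ and $bx=x(qs)$, comparing degrees gives $b\in k$. Comparing coefficients then forces $x$ to be a monomial, so $A_1=\{cs^d\mid c\in k,\ d\ge 0\}$.

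For $f=t^2-(1+q)t+q$ one gets $f(T_1)(s^i)=(q^i-1)(q^i-q)s^i$. Hence $\ker f(T_1)=k+ks$ and $E(f,1)=k\cup ks$. Now $1,s\in E(f,1)$ but $1+s\notin A_1$. So $E(f,1)$ is not a $k$-subspace, i.e.\ not a right $C^{(\sigma,\delta)}(1)$-module.

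The paper's proof has exactly this hole. From $xc\in A_{\ua}$ for $c\in C$ it concludes that $A_{\ua}$ is a right $C$-module, but $A_1$ above is not closed under addition. Your fallback is therefore the correct statement. $\ker f(T_{\ua})$ is a right $C$-submodule of $A$, while $E(f,\ua)$ is only stable under right multiplication by $C$; your identity $\ua^{xy}=\ua^x$ for $xy\neq 0$ proves this at once. The full claim does hold when $A$ is a division ring, since then $A_{\ua}=A$ and $E(f,\ua)=\ker f(T_{\ua})$.
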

	\begin{proof}
		1.  The fact that $x\in \ker f(T_{\ua})$ implies $f(\ub)=0$ is given in Proposition (\ref{Pro Roots and conjugation}).  Conversely if $f(\ub)=f(\ua^x)=0$, we have 
		$0=f(\ua^x)x=fx(\ua)=f(T_{\ua})(x)$. 
		
		2.  This is clear from 1; above.
		
		3. From Proposition (\ref{Pro T_a is right C linear}), it is clear that $\ker f(T_{\ua})$ is $C$-linear.  Now let $x\in A_{\ua}$ and $\ub \in A^n$ be such that $\ub x=\sigma(x)\ua +\ud(x)$.  Let also $c\in C^{(\sigma,\delta)}(\ua)$, then $\sigma(c)\ua+\delta(c)=\ua c$ and we have $\ub xc=\sigma(x)\ua c + \ud (x)c=
		\sigma(x)(\sigma(c)\ua 
		+\ud(c))+\ud(x)c=\sigma(xc)\ua+\ud(xc)$.  This 
		shows that $xc\in A_{\ua}$ and hence $A_{\ua}$ 
		is a right $C^{(\sigma,\delta)}(\ua)$ module.  This yields the proof. 
		
		4. and 5. are clear.
	\end{proof}

	%Definition of the $\sigma,\ud$ centralizer 
	%$C^{\sigma,\ud}(\ua)$ of an element 
	%$\ua\in A^n$.	

\end{document}